\documentclass[11pt,A4paper,reqno]{amsart}
\usepackage{fullpage}
\usepackage{setspace}
\usepackage{datetime}
\usepackage{tikz}
\usetikzlibrary{er}
\usetikzlibrary{arrows.meta}
\usepackage{amsmath}
\usepackage{amssymb}
\usepackage{verbatim,enumitem,graphics}
\usepackage{microtype}
\usepackage{xcolor}
\usepackage[backref=page]{hyperref}
\hypersetup{%
    colorlinks,
    linkcolor={red!50!black},
    citecolor={green!50!black},
    urlcolor={blue!50!black}
}
\usepackage{dsfont}

\usepackage[utf8]{inputenc}

\usepackage[abbrev,msc-links,backrefs]{amsrefs} 
\usepackage{doi}

\renewcommand{\PrintDOI}[1]{\doi{#1}}

\usepackage{fnpct}

\usepackage{MnSymbol}

\setenumerate{leftmargin=*} % ,labelindent=\parindent}

\usepackage{enumitem}

\newtheorem{theorem}{Theorem}[section]
\newtheorem{lemma}[theorem]{Lemma}
\newtheorem{proposition}[theorem]{Proposition}

\newtheorem{conjecture}[theorem]{Conjecture}

\def\({\left(}
\def\){\right)}  

\newcommand{\oldqed}{}
\def\endofClaim{\hfill\scalebox{.6}{$\Box$}}

% explanations above <,>,=,\le,\ge

\newcommand{\cH}{\mathcal{H}}

\newcommand{\cF}{\mathcal{F}}

\newcommand{\cK}{\mathcal{K}}
\newcommand{\cI}{\mathcal{I}}

\newcommand{\cC}{\mathcal{C}}

\newcommand{\PP}{\mathbb{P}}

\newcommand{\ovl}{\overline}

\let\subset\subseteq
\let\epsilon\varepsilon
\let\phi\varphi

\DeclareMathOperator{\cc}{cc}

%%%%%%%%%%%%%%%%%%%%%%%%%%%%%%%%%%%%%%%%%%%%%%%%%%%%%%%%%%%%%%%%%%%%%%
\usepackage{lineno}
\newcommand*\patchAmsMathEnvironmentForLineno[1]{%
\expandafter\let\csname old#1\expandafter\endcsname\csname #1\endcsname
\expandafter\let\csname oldend#1\expandafter\endcsname\csname end#1\endcsname
\renewenvironment{#1}%
{\linenomath\csname old#1\endcsname}%
{\csname oldend#1\endcsname\endlinenomath}}%
\newcommand*\patchBothAmsMathEnvironmentsForLineno[1]{%
\patchAmsMathEnvironmentForLineno{#1}%
\patchAmsMathEnvironmentForLineno{#1*}}%
\AtBeginDocument{%
\patchBothAmsMathEnvironmentsForLineno{equation}%
\patchBothAmsMathEnvironmentsForLineno{align}%
\patchBothAmsMathEnvironmentsForLineno{flalign}%
\patchBothAmsMathEnvironmentsForLineno{alignat}%
\patchBothAmsMathEnvironmentsForLineno{gather}%
\patchBothAmsMathEnvironmentsForLineno{multline}%
}
%%%%%%%%%%%%%%%%%%%%%%%%%%%%%%%%%%%%%%%%%%%%%%%%%%%%%%%%%%%%%%%%%%%%%%

%\linenumbers
\setstretch{1.28}
\raggedbottom

\begin{document}
\title{Some results and problems on clique coverings of hypergraphs}

\author{Vojtech R\"{o}dl}

\author{Marcelo Sales}

\thanks{The authors were partially supported by NSF grant DMS 1764385}

\address{Department of Mathematics, Emory University, 
    Atlanta, GA, USA}
\email{\{vrodl|mtsales\}@emory.edu}

\begin{abstract}
For a $k$-uniform hypergraph $F$ we consider the parameter $\Theta(F)$, the minimum size of a clique cover of the edge set of $F$. We derive bounds on $\Theta(F)$ for $F$ belonging to various classes of hypergraphs.
\end{abstract}

\maketitle

\section{Introduction}\label{sec:intro}
Let $G$ be a $k$-uniform hypergraph, a \textit{set representation} of $G$ on a set $T$ is a system of subsets $\{S_v \subset T:\: v \in V(G)\}$ with the property that $$\{v_1, v_2, \ldots, v_k\}\in G \text{ if and only if } \bigcap\limits_{i=1}^k S_{v_i}\neq \emptyset.$$
The \textit{representation number} $\Theta(G)$ of $G$ is the smallest cardinality of a set $T$ which admits a set representation. 

We define the \textit{clique covering number} $\cc(G)$ of a $k$-graph $G$ as the minimum integer $m$ such that there exist complete graphs $C_1,\ldots,C_m$ satisfying $G=\bigcup_{i=1}^m C_i$, i.e., every edge is covered by a clique and no clique contains an edge that is not from $G$.
It is well known that the representation number $\Theta(G)$ is equal to $\cc(G)$ (see e.g. \cites{L, RR} or Section \ref{sec:appendix} for a proof). The parameter $\Theta$ has been subject of
interest for a number of researchers (see e.g. \cites{EGP, A, ER}). In particular it was proved in \cite{A} that $\Theta(G) \leq c_1 d^2 \log n$ for any graph $G$ on $n$ vertices with maximum degree on its complement $\Delta(\ovl{G}) \leq d$. On the other hand, it was proved in \cite{ER} that there are graphs $G$ with $\Delta(\ovl{G})\leq d$ and $\Theta(G) \geq c_2 \frac{d^2}{\log d} \log n$.

Here we will be mainly interested in extending these results to $k$-uniform hypergraphs. Perhaps surprisingly, this turned out to be quite nontrivial, and our results in this direction are far from being optimal for large values of $k$.

Since most of our results regarding $\Theta(G)$ will be formulated in terms of restrictions on the degree of the complement $\ovl{G}$, we will introduce a parameter $\vartheta(G)=\Theta(\ovl{G})$ allowing for simpler formulation of results. Clearly, $\vartheta(G)$ is the minimum cardinality of a set $T$ such that there is a system $\{S_v \subseteq T:\: v \in V(G)\}$ of sets with
\begin{align*}
\{v_1,\ldots , v_k\}\in G\text{ if and only if }\bigcap\limits_{i=1}^k S_{v_i}= \emptyset.
\end{align*}
Alternatively, one can view $\vartheta(G)$ as the minimum cover of the edges of the complement $\ovl{G}$ by independent sets of $G$. In what follows we will often use the latter definition instead of the former. Moreover, we say that $G$ can be \textit{$t$-represented} if there exists a system of independent sets $\{I_j:\:1\leq j \leq t\}$ covering the edges of $\ovl{G}$.

\section{Results}\label{sec:results}
Let $G$ be a $k$-graph. For $S \subset V(G)$, let
\begin{equation*}
    \deg_G(S)=|\{e \in G:\: S \subseteq e\}|
\end{equation*}
be the degree of the set $S$ in $G$ and
\begin{equation*}
   \Delta_i (G) = \max\limits_{|S|=i, S\subseteq V(G)} \deg_G(S)
\end{equation*}
be the maximum degree of an $i$-tuple of vertices of $G$. (Consequently, $\Delta_1(G)=\Delta(G)$ is just the usual maximum degree).

We say that a $k$-graph $G$ is \textit{$d$-balanced} if $\Delta_i (G) \leq d^{\frac{k-i}{k-1}}$ for all $1\leq i \leq k$. In particular, a $d$-balanced graph has maximum degree $d$. Our first result gives almost sharp bounds on $\vartheta(G)$ for the family of $d$-balanced $k$-graphs. Let 
\begin{align*}
    b(n,d,k)=\max\{\vartheta(G):\: \text{$G$ is a $d$-balanced $k$-graph}\}
\end{align*}
be the maximum value of $\vartheta(G)$ over all $d$-balanced graphs with $n$ vertices.

\begin{theorem}\label{th:balanced}
For $n,d,k\geq 2$, there exist positive constants $c_1$ and $c_2$ depending only on $k$ such that
\begin{align*}
\frac{c_1d^{\frac{k}{k-1}}}{\log d} \log \left(\frac{n^{k-1}}{d}\right) \leq b(n,d,k) \leq c_2 d^{\frac{k}{k-1}} \log n
\end{align*}
holds.
\end{theorem}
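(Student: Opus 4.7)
The plan is to prove the two bounds by separate probabilistic arguments.

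\emph{Upper bound.} Given a $d$-balanced $k$-graph $G$ on $n$ vertices, I would show $\vartheta(G)\le c_2 d^{k/(k-1)}\log n$ by sampling $t$ independent sets. Set $p = c\,d^{-1/(k-1)}$ for a small constant $c = c(k)$. In each trial, include every vertex of $V(G)$ independently in a random set $S$ with probability $p$ and then delete one vertex from every edge of $G$ that lies in $S$, obtaining an independent set $I\subseteq S$. For a fixed non-edge $E = \{v_1, \dots, v_k\}$ of $G$, condition on the event $E\subseteq S$ (probability $p^k$); the expected number of edges $e\in G$ with $|e\cap E|=j$, $1\le j\le k-1$, that lie in $S$ is at most
\begin{equation*}
\sum_{j=1}^{k-1}\binom{k}{j}\Delta_j(G)p^{k-j} \;\le\; \sum_{j=1}^{k-1}\binom{k}{j}(p\,d^{1/(k-1)})^{k-j} \;=\; \sum_{j=1}^{k-1}\binom{k}{j}c^{k-j},
\end{equation*}
which can be made smaller than $1/2$ by choosing $c$ small. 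Thus $E$ survives the deletion step with probability $\Omega(1)$, so $\PP[E\subseteq I]\ge \Omega(p^k) = \Omega(d^{-k/(k-1)})$. Running $t = C d^{k/(k-1)}\log n$ independent trials and union-bounding over the at most $n^k$ non-edges of $G$ produces a family of $t$ independent sets covering $\overline{G}$ with positive probability.

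\emph{Lower bound.} For the matching lower bound I would take $G = \cH(n, q)$, a random $k$-graph with $q = c\,d \binom{n-1}{k-1}^{-1}$. Applying Chernoff bounds to the binomials $\deg_G(S)\sim \mathrm{Bin}(\binom{n-i}{k-i},q)$ for each $i$-set $S$ and union-bounding over the $\binom{n}{i}$ such $S$ (using the hypothesis $d\le n^{k-1}$) shows that $G$ is $d$-balanced with high probability. To bound $\vartheta(G)$ from below, note that a $t$-representation of $G$ is determined by a map $\phi \colon V(G)\to 2^{[t]}$; the associated $k$-graph $G_\phi$ has edge set $\binom{V}{k}\setminus\bigcup_{j\in [t]}\binom{I_j}{k}$, where $I_j = \phi^{-1}(j)$. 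A first-moment computation gives
\begin{equation*}
\PP[\vartheta(G) \le t] \;\le\; \sum_\phi \PP[G = G_\phi] \;=\; \sum_\phi q^{\binom{n}{k}-Y(\phi)}(1-q)^{Y(\phi)},
\end{equation*}
where $Y(\phi) = |\bigcup_j\binom{I_j}{k}|$. The dominant $\phi$ are those with $Y(\phi)\approx(1-q)\binom{n}{k}$ (so that $G_\phi$ has the typical density); in this regime each $I_j$ must be an independent set of $G$, so $|I_j| \le \alpha(G) = O((n^{k-1}\log n/d)^{1/(k-1)})$ w.h.p. Combining these size constraints with the entropy cost $\approx 2^{-H(q)\binom{n}{k}}$ of a typical sparse random $k$-graph should yield the claimed bound $t \ge c_1 d^{k/(k-1)}\log(n^{k-1}/d)/\log d$.

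The delicate step is the lower bound. A naive estimate $\vartheta(G)\ge|\overline G|/\binom{\alpha(G)}{k}$ with the above $\alpha(G)$ gives only $\vartheta(G)\ge \Omega((d/\log n)^{k/(k-1)})$, short of the target by a factor of roughly $(\log n)^{k/(k-1)}\log d/\log(n^{k-1}/d)$. Bridging this gap will require a more refined union bound over $\phi$, in the spirit of Erd\H{o}s--Faudree--Ordman for the graph case $k=2$, carefully tracking the joint distribution of the sizes $|I_1|,\dots,|I_t|$ and the overlap structure of the $\binom{I_j}{k}$'s rather than bounding each $|I_j|$ by $\alpha(G)$ uniformly. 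By contrast, the verification of $d$-balancedness and the upper-bound argument are essentially routine.
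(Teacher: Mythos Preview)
Your upper-bound argument is correct and essentially identical to the paper's: sample random subsets at density $p\asymp d^{-1/(k-1)}$, delete vertices so as to destroy all edges of $G$ inside, and use the $d$-balanced hypothesis to show that a fixed non-edge survives with probability $\Omega(p^k)$.

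The lower bound has a genuine gap, and your proposed refinement does not close it. The random $k$-graph $\cH(n,q)$ at density $q\asymp d/n^{k-1}$ has independence number
\[
\alpha\;=\;\Theta\Bigl(\tfrac{n}{d^{1/(k-1)}}(\log d)^{1/(k-1)}\Bigr),
\]
and this extra $(\log d)^{1/(k-1)}$ is the obstacle. Even if you upgrade from the crude estimate $|\ovl G|/\binom{\alpha}{k}$ to the full counting argument---comparing the number of $t$-tuples of sets of size $\le\alpha$ against the number of typical $d$-balanced graphs, whose logarithm is $\Theta\bigl(nd\log(n^{k-1}/d)\bigr)$---you obtain only
\[
t\ \ge\ c\,\frac{d^{k/(k-1)}}{(\log d)^{k/(k-1)}}\,\log\Bigl(\frac{n^{k-1}}{d}\Bigr),
\]
still short of the target by $(\log d)^{1/(k-1)}$. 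Tracking overlaps among the $\binom{I_j}{k}$ does not help: the loss comes from the size of $\alpha$ itself, not from slack in the counting.

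The paper uses the same counting principle but applies it to an explicit family rather than to the random graph. Partition $[n]$ into $k$ equal parts; inside each part place vertex-disjoint cliques $K^{(k)}_p$ with $p=(d/2)^{1/(k-1)}$, which forces $\alpha\le(k-1)n/p=\Theta(n/d^{1/(k-1)})$ with \emph{no} logarithmic factor. Across the parts add an arbitrary linear $k$-partite $k$-graph of maximum degree $d/2$; linearity keeps the union $d$-balanced, and the family of such transversal graphs has size at least $(n^{k-1}/d)^{\Theta(nd)}$, matching the entropy of the random model. With the optimal $\alpha$ and the same entropy, the counting bound now yields exactly the stated lower bound. The idea you are missing is this planting of cliques to pin down the independence number while drawing the entropy from an independent linear piece.
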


For the general family of bounded degree hypergraphs, we obtain the following bounds. Let $\vartheta(n,d,k)$ be the maximum value of $\vartheta(G)$, where $G$ is a $k$-graph on $n$ vertices with $\Delta(G)=\Delta_1(G)\leq d$.

\begin{theorem}\label{th:generalcase}
For $n,d,k\geq 3$, there exist positive constants $c_1$ and $c_2$ depending only on $k$ such that
\begin{align*}
    c_1 \frac{d^{\frac{k}{k-1}}}{\log d} \log \left(\frac{n^{k-1}}{d}\right) \leq \vartheta(n,d,k) \leq c_2 d^{\frac{k}{2}} \log n.
\end{align*}
Moreover, if $k$ is even the lower bound can be improved to
\begin{align*}
\frac{c_1 d^2}{\log d} \log \left(\frac{n}{d}\right) \leq \vartheta(n,d,k)
\end{align*}
\end{theorem}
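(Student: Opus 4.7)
For the first lower bound $\tfrac{c_1 d^{k/(k-1)}}{\log d}\log(n^{k-1}/d) \leq \vartheta(n,d,k)$, the inequality is immediate from Theorem~\ref{th:balanced}: every $d$-balanced $k$-graph $G$ has $\Delta_1(G)\leq d^{(k-1)/(k-1)} = d$, so the extremal $d$-balanced graphs produced there also witness the maximum defining $\vartheta(n,d,k)$.

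For the sharper lower bound when $k = 2\ell$ is even, I would construct $G$ by blowing up a ``hard'' graph. Let $G_0$ be a graph on $m$ vertices with $\Delta(G_0)\leq d$ and $\vartheta(G_0) \geq c d^2\log(m/d)/\log d$, guaranteed by the Erd\H{o}s--R\"{o}dl construction in \cite{ER}. Set $V(G):=V(G_0)\times[\ell]$ (so $n = m\ell$), and for each edge $uv\in E(G_0)$ put the single $k$-set $\{(u,j),(v,j):j\in[\ell]\}$ into $G$. Each copy $(u,j)$ then has $G$-degree $\deg_{G_0}(u)\leq d$. The projection $J \mapsto J^0 := \{u : (u,j)\in J \text{ for all } j\in[\ell]\}$ sends $G$-independent sets to $G_0$-independent sets (if $uv\in E(G_0)$ with $u,v\in J^0$, the entire associated $k$-set lies in $J$), and every non-edge $uv\in \bar G_0$ is covered by $J^0$ for the $J$ that covers the associated $k$-set. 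Hence $\vartheta(G)\geq\vartheta(G_0) = \Omega(d^2\log(n/d)/\log d)$.

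For the upper bound $\vartheta(n,d,k)\leq c_2 d^{k/2}\log n$, my plan is a probabilistic construction. Choose $p = c/\sqrt{d}$ and let $I\subseteq V$ be random with each vertex included independently with probability $p$; the expected number of $G$-edges inside $I$ is $|E(G)|\,p^k = O(n/d^{k/2-1})$. Process $I$ into a $G$-independent set $I'$ by the alteration method, deleting a vertex from each surviving $G$-edge; with a careful choice of which vertex to remove, one would like to ensure $\PP(e\subseteq I')\geq c' p^k = \Omega(d^{-k/2})$ for every non-edge $e\in\bar G$. Repeating this $t = O(d^{k/2}\log n)$ times and taking a union bound over the at most $\binom{n}{k}$ non-edges then yields a clique cover of $\bar G$ of the required size.

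The main obstacle is the alteration analysis. A uniform random deletion rule fails: conditionally on $e\subseteq I$, the $G$-edges sharing $k-1$ vertices with $e$ contribute an expected $\Theta(dp) = \Theta(\sqrt{d})$ deletion attempts on $e$-vertices, which already destroys $e$ with high probability. The fix must exploit that these dangerous ``almost-$e$'' edges form a very structured object: namely, for each $(k-1)$-subset of $e$ their count is bounded by $\Delta_{k-1}(G)\leq d$ and they form a one-dimensional ``link star'', so a careful coupling (or a Lov\'asz Local Lemma argument) should let one choose the deletion vertex outside $e$ whenever possible without biasing other non-edges. Making this rigorous, and then handling the case of odd $k$ by isolating a single vertex and reducing to the $(k-1)$-uniform (even) case at the cost of a further $\sqrt{d}$ factor, is where I expect the bulk of the technical work to lie.
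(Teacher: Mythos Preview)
Your two lower-bound arguments are correct and are exactly what the paper does: the first lower bound is inherited from the $d$-balanced construction (Proposition~\ref{prop:lower}), and the even-$k$ improvement is precisely the blow-up of a hard $2$-graph by $\ell$-tuples with the same projection $I_j\mapsto\tilde I_j$ that you describe (Proposition~\ref{prop:evenlb}).

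The gap is in the upper bound. You have the right skeleton (random $W_j$ with $p=c/\sqrt d$, then alteration, then $t=O(d^{k/2}\log n)$ repetitions), and you correctly locate the obstacle: edges $g$ with $|g\cap\bar e|=k-1$ contribute $\Theta(\sqrt d)$ expected deletions on $\bar e$. But your proposed remedy --- ``choose the deletion vertex outside $\bar e$ whenever possible'' via LLL or an unspecified coupling --- cannot work as stated, because the deletion rule must be a single rule applied to $W_j$, not one that knows which non-edge $\bar e$ it is trying to protect; different non-edges sharing a $(k-1)$-set would issue conflicting instructions. The paper resolves this with a concrete, $\bar e$-agnostic rule: form the auxiliary $(k-1)$-graph $H=\{S\in V^{(k-1)}:\deg_G(S)\ge\sqrt d\}$, note that $\Delta(H)\le(k-1)\sqrt d$, and when cleaning an edge $g$ that contains exactly one $S\in H$, delete the vertex of $g\setminus S$. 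Now, conditionally on $\bar e\subseteq W_j$, the only way a vertex of $\bar e$ can be deleted is if either some $S\subset\bar e$ with $S\notin H$ extends to an edge inside $W_j$ (probability $\le p\cdot\sqrt d=O(1)$ per such $S$), or some $(k-2)$-subset of $\bar e$ extends to an $H$-edge inside $W_j$ (probability $\le p\cdot\Delta(H)=O(1)$ per such subset). This dichotomy at the $\sqrt d$ threshold is the missing idea; once you have it, the analysis is a direct union bound with no LLL needed.

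Finally, your proposed odd/even reduction is unnecessary and, as sketched, unjustified: the link $(k-1)$-graph of a vertex in a $k$-graph of maximum degree $d$ need not itself have maximum degree $o(d)$, so ``isolating a single vertex'' does not obviously buy a $\sqrt d$ saving. The paper's argument above is uniform in $k\ge3$.
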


Note that since every $2$-graph $G$ with $\Delta(G)\leq d$ is $d$-balanced, then the bounds of $\vartheta(n,d,k)$ provided in Theorems \ref{th:balanced} and \ref{th:generalcase} are almost sharp for $k=2,3,4$ and $d\ll n$. Finally, we address similar questions for some other classes of hypergraphs. For example, denoting $\vartheta_s(n,k)$ as the maximum value of $\vartheta(G)$, where $G$ runs over all $k$-uniform simple hypergraphs, we show the following:

\begin{theorem}\label{th:linear}
For $k\geq 3$ and $n\geq n_0 (k)$, there exist positive constants $c_1$ and $c_2$ depending only on $k$ such that
\begin{align*}
    \frac{c_1 n^{\frac{k}{k-1}}}{(\log n)^{\frac{1}{k-1}}} \leq \vartheta_s(n,k) \leq c_2 n^{\frac{k}{k-1}}\log n
\end{align*}
holds.
\end{theorem}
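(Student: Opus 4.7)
The plan is to deduce the upper bound immediately from Theorem~\ref{th:balanced}, and to establish the lower bound by a probabilistic construction of a dense random linear $k$-graph whose $\vartheta$-value is then bounded from below by a counting argument.

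For the upper bound, observe that if $G$ is a simple (linear) $k$-graph on $n$ vertices then $\Delta_1(G)\le (n-1)/(k-1)\le n$ and, because every two distinct edges of $G$ share at most one vertex, $\Delta_i(G)\le 1\le n^{(k-i)/(k-1)}$ for all $2\le i\le k$. Hence $G$ is $n$-balanced, and Theorem~\ref{th:balanced} gives $\vartheta(G)\le c_2\,n^{k/(k-1)}\log n$, which is exactly the desired upper bound.

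For the lower bound, I would take $G$ to be a random linear $k$-graph on $n$ vertices with $|E(G)|=\Theta(n^2)$, e.g.\ a near-Steiner $(n,k,2)$-system produced by the semi-random (``nibble'') method. Standard analysis shows that such a $G$ has independence number
\[
\alpha(G)\ \le\ \alpha_0\ :=\ C\, n^{(k-2)/(k-1)}(\log n)^{1/(k-1)}
\]
with positive probability. Any cover of $\overline{G}$ by independent sets of $G$ consists of sets of size at most $\alpha_0$, and since each such set contains at most $\binom{\alpha_0}{k}$ non-edges out of the $\binom{n}{k}-|E(G)|=\Theta(n^k)$ non-edges of $G$ in total, a crude count gives $\vartheta(G)\ge \Omega\big(n^{k/(k-1)}/(\log n)^{k/(k-1)}\big)$. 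To sharpen this to $\Omega\big(n^{k/(k-1)}/(\log n)^{1/(k-1)}\big)$ as claimed, I would adapt the probabilistic argument underlying the lower bound of Theorem~\ref{th:balanced}: rather than use only the maximum independent set, I would perform a union bound over all candidate families of subsets of $V(G)$ of the right total size, showing that for a typical random linear $G$ no such family is simultaneously composed of independent sets and covers every non-edge of $G$.

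The main obstacle is precisely this $\log n$ refinement, which cannot be obtained from the naive independence-number inequality above. Carrying it out requires quantitative estimates on the number of independent sets of each size in a random partial Steiner system—an analogue, valid under the linearity constraint, of the edge-concentration estimates that drive the lower bound in Theorem~\ref{th:balanced}. By contrast, the upper bound is essentially free from Theorem~\ref{th:balanced} once the trivial degree estimates afforded by linearity are noted.
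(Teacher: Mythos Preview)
Your upper bound is correct and matches the paper exactly: a simple $k$-graph is $n$-balanced, so Theorem~\ref{th:balanced} gives $\vartheta(G)\le c_2 n^{k/(k-1)}\log n$ immediately.

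For the lower bound, you correctly identify that the naive covering count
\[
\vartheta(G)\ \ge\ \frac{\binom{n}{k}-|E(G)|}{\binom{\alpha_0}{k}}\ =\ \Omega\!\left(\frac{n^{k/(k-1)}}{(\log n)^{k/(k-1)}}\right)
\]
loses a factor of $\log n$, and that some refinement is needed. However, your proposed refinement---a union bound over candidate families of independent sets for a single random linear $G$, driven by ``quantitative estimates on the number of independent sets of each size''---is not the mechanism that closes the gap, and as stated it is not clear it can be made to work.

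The paper's argument is of a different nature. It does not analyse a single random $G$; instead it builds a large \emph{family} $\cH$ of linear (indeed, full Steiner) systems, all sharing the same deterministic bound on $\alpha$, and then invokes the elementary counting observation (Lemma~\ref{lem:representationnum}) that a $t$-representation determines its hypergraph uniquely, so that at most $\bigl(2\binom{n}{\alpha}\bigr)^t$ hypergraphs with $\alpha(H)\le\alpha$ can be $t$-represented. Comparing this with $|\cH|$ yields $t\ge \log|\cH|/(\alpha\log n)$, which is the sharp bound. Two external ingredients make this go:
\begin{itemize}
    \item a fixed full Steiner $(m,k,2)$-system $S$ with $\alpha(S)\le C m^{(k-2)/(k-1)}(\log m)^{1/(k-1)}$, supplied by \cite{GPR}; the family $\cH$ is obtained by taking $k$ disjoint copies of $S$ and completing them to a full $(n,k,2)$-system via a $K_k^{(2)}$-decomposition of the $k$-partite part, so that every $H\in\cH$ satisfies $\alpha(H)\le k\alpha(S)$ \emph{uniformly};
    \item Keevash's count of $K_k^{(2)}$-decompositions of $K_k^{(2)}(m)$, which gives $|\cH|\ge 2^{c'n^2\log n}$.
\end{itemize}
The first ingredient is precisely what your sketch lacks: you need the independence bound to hold for \emph{every} graph in a large family, not just for a typical one, because the counting step sums over all candidate covers of size $\le\alpha_0$ and this is only legitimate if the particular $G$ you end up with is guaranteed to have $\alpha(G)\le\alpha_0$. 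Embedding a fixed small-$\alpha$ Steiner system into every member of the family is what enforces this. No container-type count of independent sets is required; only the crude bound $\alpha(H)\le k\alpha(S)$ together with the size of $|\cH|$.
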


The paper is organised as follows. In Section \ref{sec:upper} we discuss the upper bounds of Theorem \ref{th:balanced} and \ref{th:generalcase}, while in Section \ref{sec:lower} we provide their respective lower bounds. Section \ref{sec:steiner} is devoted to the problem of representing special hypergraphs.

\section{Upper Bounds of Theorem \ref{th:balanced} and Theorem \ref{th:generalcase}}\label{sec:upper}
We start with the upper bound of Theorem \ref{th:balanced}. We are going to show that if $G$ is a $d$-balanced $k$-graph, then $\vartheta(G) \leq 2^k k^{k+1}d^{\frac{k}{k-1}} \log n$.

\begin{proposition}\label{prop:upperbalanced}
Let $n,d,k\geq 2$ and $c=2^{k+2}k^{k+1}$. If $G$ is a $d$-balanced $k$-graph on $n$ vertices, then $\vartheta(G)\leq cd^{\frac{k}{k-1}}\log n$.
\end{proposition}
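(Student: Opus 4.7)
The plan is a random-sampling-plus-alteration argument. Fix the sampling probability
\[
p:=\frac{1}{2(k-1)\,d^{1/(k-1)}}
\]
and draw $N$ independent random sets $S_1,\dots,S_N\subseteq V(G)$ by placing each vertex in $S_i$ independently with probability $p$. From $S_i$ extract an independent set $I_i$ by choosing, for every edge $e\in G[S_i]$, a uniformly random vertex $v_e\in e$, and setting $I_i:=S_i\setminus\{v_e:e\in G[S_i]\}$. I aim to show that for $N=O(p^{-k}\log n)$ independent trials, with positive probability every non-edge $f\in\binom{V(G)}{k}\setminus G$ lies in some $I_i$; this gives $\vartheta(G)\le N$.

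The heart of the argument is the local estimate $\PP(f\subseteq I_i)\ge p^k/3$ for every non-edge $f$. Clearly $\PP(f\subseteq S_i)=p^k$. Given $f\subseteq S_i$, a vertex $v\in f$ is removed iff some edge $e\in G[S_i]$ containing $v$ picks $v$ for deletion; applying $\PP(v\text{ removed}\mid S_i)\le \deg_{G[S_i]}(v)/k$ and union-bounding over $v\in f$ yields
\[
\PP\bigl(\exists\, v\in f \text{ removed}\bigm|f\subseteq S_i\bigr)\;\le\;\frac{1}{k}\sum_{j=1}^{k-1}j\,N_j\,p^{k-j},\qquad N_j:=|\{e\in G:\,|e\cap f|=j\}|.
\]
The $d$-balancedness hypothesis enters here: $N_j\le\binom{k}{j}d^{(k-j)/(k-1)}$, and substituting the value of $p$ collapses the sum to $\bigl(1+\tfrac{1}{2(k-1)}\bigr)^{k-1}-1\le e^{1/2}-1<2/3$, so $\PP(f\subseteq I_i\mid f\subseteq S_i)\ge 1/3$. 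It is essential that the sum excludes $j=k$: since $f$ is a non-edge, every edge of $G$ meeting $f$ spends at least one vertex outside $f$, so it can be broken without touching $f$.

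The rest is bookkeeping: $(1-p^k/3)^N\le \exp(-Np^k/3)$ combined with the union bound over the at most $n^k$ non-edges forces $N\ge 3k\log n/p^k$; since $p^{-k}=(2(k-1))^{k}d^{k/(k-1)}$ and $4k(2(k-1))^k\le 2^{k+2}k^{k+1}$, this yields $N\le 2^{k+2}k^{k+1}d^{k/(k-1)}\log n$. The main obstacle is calibrating $p$: a larger $p$ puts too many edges into $G[S_i]$ and the deletion step damages $f$ too often, while a smaller $p$ makes $f\subseteq S_i$ too rare. The $d$-balanced hypothesis is used precisely so that every term $\binom{k}{j}d^{(k-j)/(k-1)}p^{k-j}$ in the sum is of comparable size when $p\sim d^{-1/(k-1)}$; this equalisation of contributions from the different intersection sizes $j$ is what produces the correct exponent $k/(k-1)$ on $d$.
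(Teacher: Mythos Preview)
Your proof is correct and follows the same sample-then-clean strategy as the paper, with the same sampling density $p\sim d^{-1/(k-1)}$ and the same probability-amplification/union-bound finish. The only cosmetic difference is the cleaning rule: the paper deletes \emph{all} vertices of every edge present in the random sample (so survival of $f$ simply means no edge of $G[S_i]$ meets $f$), whereas you delete one uniformly random vertex per edge; this buys you the extra factor $j/k$ in the sum and permits the slightly coarser denominator $2(k-1)$ in place of $2k$ in $p$, but the resulting constant is the same.
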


\begin{proof}
We recall that $\vartheta(G)$ is the minimum size of a cover of $\ovl{G}$ by independent sets of $G$. Consider $t= cd^{\frac{k}{k-1}}\log n$ random subsets $W_1,W_2,\ldots, W_t$ of $V:=V(G)$, where each $W_j$ is chosen by selecting vertices from $V$ independently and uniformly with probability $p=\frac{1}{2kd^{1/(k-1)}}$. Our aim is to cover $\ovl{G}$ by modifying the sets $W_j$ for $1\leq j \leq t$. For each $W_j$, construct an independent set as follows: For every edge $e$ in $G[W_j]$ delete all of its vertices from $W_j$. Let $I_j$ be the set obtained after deleting vertices from every edge in $G[W_j]$, i.e., 
\begin{align*}
I_j=W_j\setminus \bigcup_{e\in G[W_j]}e.
\end{align*} 
Clearly, $I_j$ is an independent set. We claim that $\{I_1,\ldots,I_t\}$ is a covering of $\ovl{G}$ with positive probability.

For a $k$-tuple $\ovl{e}=\{x_1,\ldots,x_k\} \in \ovl{G}$, we examine the probability that $\ovl{e} \subseteq I_j$ for some $1\leq j\leq t$. To do that, note that the only reason why a vertex $x_i \in \ovl{e}$ could be deleted is if there exists an edge $e \in G[W_j]$ such that $x_i\in e\cap \ovl{e}$. That is, $\ovl{e}\subseteq I_j$ if $\ovl{e}\subseteq W_j$ and $e\cap \ovl{e}=\emptyset$ for all $e\in G[W_j]$. Hence, we obtain 
\begin{align*}
    \PP(\ovl{e} \subset I_j)\geq p^k \left(1-\sum\limits_{i=1}^{k-1} \Delta_i(G) p^{k-i} \binom{k}{i}\right)\geq p^k\left(1-\sum\limits_{i=1}^{k-1}\frac{\binom{k}{k-i}}{(2k)^{k-i}}\right)>\frac{p^k}{4}.
\end{align*}

Consequently, since all $I_j$ were chosen independently, we obtain that
\begin{align*}
    \PP\left(\bigwedge_{j=1}^t (\ovl{e}\not\subseteq I_j)\right)=\prod_{j=1}^t\PP\left(\ovl{e}\not\subseteq I_j\right)<\left(1-\frac{p^k}{4}\right)^t \leq \exp\left(-\frac{p^kt}{4}\right)=\frac{1}{n^k}. 
\end{align*}

Since there are at most $\binom{n}{k}$ non-edges in $\ovl{G}$ to cover, the probability that one remains uncovered is at most $\binom{n}{k}\frac{1}{n^k}<1$. Consequently, with positive probability all edges of $\ovl{G}$ are covered by $\bigcup_{j=1}^t I_j$.
\end{proof}

The proof of the upper bound of Theorem \ref{th:generalcase} follows similar lines.

\begin{proposition}\label{prop:uppergeneral}
Let $n,d,k\geq 3$, $\delta =\frac{1}{(k-1)2^{k+2}}$ and $c=\frac{2k}{\delta^k}$. If $G$ is a $k$-graph on $n$ vertices with $\Delta(G) \leq d$, then $\vartheta(G) \leq c d^{\frac{k}{2}} \log n$.
\end{proposition}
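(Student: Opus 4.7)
The plan is to mimic the random-sampling strategy of Proposition~\ref{prop:upperbalanced}, now under the weaker hypothesis $\Delta(G)\leq d$. First I would set $p = \delta\cdot d^{-1/2}$ and $t = cd^{k/2}\log n$, sample $W_1,\dots,W_t\subseteq V(G)$ independently with each vertex included in $W_j$ with probability $p$, and form $I_j = W_j\setminus\bigcup_{e\in G[W_j]}e$, which is $G$-independent by construction. Since $tp^k$ is of order $k\log n$, a union bound over the $\binom{n}{k}$ non-edges reduces the task to showing $\PP(\ovl e\subseteq I_j) \geq p^k/2$ for every fixed non-edge $\ovl e$.

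Repeating the estimate from Proposition~\ref{prop:upperbalanced}, one has
\[
\PP(\ovl e\subseteq I_j) \geq p^k\Bigl(1-\sum_{i=1}^{k-1}\binom{k}{i}\Delta_i(G)\,p^{k-i}\Bigr).
\]
Under the present hypothesis we only have the trivial bound $\Delta_i(G)\le d$, and substituting gives summands $\binom{k}{i}d\,p^{k-i}=\binom{k}{i}\delta^{k-i}d^{(2-k+i)/2}$. For $1\le i\le k-2$ the exponent of $d$ is non-positive, so each such summand is bounded by a quantity depending on $k$ and $\delta$ only. The key obstacle is the $i=k-1$ contribution, which is of order $k\delta\sqrt d$ and is not small for large $d$; it cannot be absorbed by the choice of $\delta$ alone.

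To circumvent this, I would preprocess $G$ by separating out the \emph{heavy} $(k-1)$-sets, i.e., those $T$ with $\deg_G(T)>\sqrt d$. A double-counting argument based on $\sum_T\deg_G(T)=k|G|\le nd$ gives at most $O(n\sqrt d)$ such heavy sets. Non-edges avoiding all heavy $(k-1)$-sets are covered by the random procedure above: restricting the error sum to edges whose $(k-1)$-subsets have co-degree at most $\sqrt d$ effectively replaces the bound on $\Delta_{k-1}(G)$ by $\sqrt d$, which is exactly what is needed to conclude $\PP(\ovl e\subseteq I_j)\ge p^k/2$. The remaining non-edges, those containing at least one heavy $(k-1)$-set $T$, are covered separately: for each such $T$, one constructs independent sets of the form $T\cup X$, with $X\subseteq V\setminus(T\cup L_G(T))$ chosen so that $T\cup X$ avoids all $k$-edges of $G$, using a recursive application of the bound at step $k-1$ on the appropriate auxiliary hypergraph. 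The hardest part of the plan is to carry out this recursive treatment of heavy $(k-1)$-sets while keeping the total number of independent sets within the claimed bound $cd^{k/2}\log n$.
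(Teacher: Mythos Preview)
Your diagnosis is correct: the $i=k-1$ term is the obstruction, and isolating the heavy $(k-1)$-sets $H=\{S\in V^{(k-1)}:\deg_G(S)\ge\sqrt d\}$ is the right idea. The gap is in the second phase. Your global count of heavy sets, $|H|=O(n\sqrt d)$, is correct but already exceeds the budget $cd^{k/2}\log n$ once $n$ is large, so you cannot afford even one dedicated independent set per heavy $T$. The suggested recursion is not specified: it is unclear which auxiliary $(k-1)$-hypergraph you would apply the bound to, and any construction of the form ``for each heavy $T$ build sets $T\cup X$'' produces a family whose size scales with $|H|$ rather than with $d^{k/2}\log n$. As written, the plan does not close.

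The paper avoids the two-phase split altogether by changing the deletion rule inside the single random procedure. Rather than removing every vertex of every edge in $G[W_j]$, it removes vertices one at a time: if an edge $g\in G[X]$ has exactly one heavy $(k-1)$-subset $S\in H$, delete the single vertex in $g\setminus S$; otherwise delete an arbitrary vertex of $g$. The point is that if $\ovl e$ contains a heavy $(k-1)$-set $S\subset\ovl e$ and some edge $g=S\cup\{v\}$ shows up in $W_j$, this rule removes $v\notin\ovl e$ rather than a vertex of $\ovl e$. What must then be controlled is the appearance in $W_j$ of a \emph{second} heavy $(k-1)$-set inside such a $g$ (which would trigger the arbitrary-deletion branch); here one uses the local bound $\Delta(H)\le(k-1)\sqrt d$, obtained by double counting through a single vertex, to show that the probability of a heavy $(k-1)$-set crossing $(\ovl e,W_j\setminus\ovl e)$ is at most $(k-1)\sqrt d\cdot p=(k-1)\delta$. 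Combined with your own estimates for $|S|\le k-2$ and for light $(k-1)$-subsets of $\ovl e$, this yields $\PP(\ovl e\subset I_j)\ge p^k/2$ for every non-edge simultaneously, and the union bound finishes as you indicated. The missing idea, then, is not a side construction for heavy sets but a smarter one-vertex-at-a-time cleaning that protects them.
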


\begin{proof}
%The case $k=2$ follows immediately from Proposition \ref{prop:upperbalanced}, since every $2$-graphs with maximum degree $d$ is $d$-balanced. Thus, we may assume for the remaining the proof that $k\geq 3$. 

As in the proof of Proposition \ref{prop:upperbalanced}, we want to cover all the edges of $\ovl{G}$ with independent sets of $G$. To this end, we consider $t=cd^{\frac{k}{2}}\log n$ random subsets $W_1,\ldots, W_t$ of $V$, where each $W_j$ is chosen by selecting vertices from $V$ independently and uniformly with probability $p=\frac{\delta}{\sqrt{d}}$. Note that the sets $W_1,\ldots,W_t$ are not necessarily independent. Our aim is to modify those sets $W_j$ to independent sets $I_j$ such that $\bigcup_{j=1}^t I_j$ covers every $k$-tuple in $\ovl{G}$.

To this end, we consider the auxiliary $(k-1)$-graph $H$ given by $V(H)=V$ and 
\begin{align*}
    H=\left\{S \in V^{(k-1)}:\: \deg_{G}(S)\geq \sqrt{d}\right\},
\end{align*}
i.e., $H$ is the $(k-1)$-graph where the edges are the $(k-1)$-tuples of large degree in $G$. The next proposition gives an upper bound on the maximum degree of $H$.

\begin{proposition}\label{prop:auxiliarydeg}
$\Delta(H) \leq (k-1)\sqrt{d}$.
\end{proposition}
\begin{proof}
Assume the contrary and let $v$ be a vertex with $\deg_H (v) > (k-1)\sqrt{d}$. Then by a counting argument we obtain that
\begin{align*}
    \deg_G(v)= \frac{\sum_{v \in S \in V^{(k-1)}}\deg_G(S)}{k-1}\geq \frac{\deg_H(v)\sqrt{d}}{k-1}>d,
\end{align*}
which contradicts the fact that $\Delta(G)\leq d$.
\end{proof}

Now we describe how to construct the independent sets $I_j$. For each $1\leq j\leq t$, we sequentially remove vertices from $W_j$ satisfying the following cleaning strategy:

\begin{center}
    \textbf{\underline{Cleaning Strategy}}
\end{center}

Let $X=W_j$. While there is an edge in $G[X]$ perform one of the two operations:
\begin{description}
    \item[Operation (i)] If there is an edge $g \in G[X]$ containing precisely one $(k-1)$-tuple $S \in H$, then we remove the vertex not in $S$ from $X$, i.e., the vertex in the singleton set $g\setminus S$.
    \item[Operation (ii)] Otherwise, if all the edges in $G[X]$ contain either zero or more than one $(k-1)$-tuple from $H$, then we select an arbitrary edge $g\in G[X]$ and delete an arbitrary vertex from it.
\end{description}

Set $I_j$ to be the resulting $X$ obtained after the process is over. 

\vspace{0.5cm}

Clearly, the set $I_j$ does not contain any edge from $G$ and thus it is independent. It remains to show that with positive probability any edge from the complement $\ovl{G}$ is covered by $\bigcup_{j=1}^tI_j$. 

For a $k$-tuple $\ovl{e}=\{x_1,\ldots,x_k\} \in \ovl{G}$, we want to estimate the probability that $\ovl{e}\subseteq I_j$ for some $1\leq j\leq t$. The following lemma gives us a lower bound on the probability.

\begin{lemma}\label{lem:probIj}
For $\ovl{e} \in \ovl{G}$ and $1\leq j \leq t$,
\begin{align*}
    \PP(\ovl{e}\subseteq I_j)\geq \frac{1}{2}\left(\frac{\delta}{\sqrt{d}}\right)^{k}.
\end{align*}
\end{lemma}

\begin{proof}

In what follows, we will say that the set $X$ \textit{crosses} the pair $(Y,Z)$ if $X$ has non empty intersection with both $Y$ and $Z$. Set $\ovl{e}=\{x_1,\ldots,x_k\}$. We start by defining some auxiliary events. 

For a set $S\subseteq \ovl{e}$ with $S\notin H$ and $|S|\leq k-1$, let 
\begin{align}\label{eq:AS}
    A_S=\left\{\nexists R\subseteq W_j-\ovl{e},\, R\cup S\in G\right\}
\end{align}
be the event that there is no edge $g\in G$ crossing the pair $(\ovl{e}, W_j-\ovl{e})$ with $\ovl{e}\cap g=S$. Similarly, for a set $S\subseteq \ovl{e}$ with $|S|\leq k-2$, let
\begin{align}\label{eq:BS}
    B_S=\left\{\nexists R\subseteq W_j-\ovl{e},\, R\cup S\in H\right\}
\end{align}
be the event stating that there is no $(k-1)$-tuple $h\in H$ crossing the pair $(\ovl{e},W_j-\ovl{e})$ with $\ovl{e}\cap h=S$.

The next claim gives a lower bound on the probability of $\ovl{e}$ be covered by $I_j$.

\begin{proposition}\label{prop:events}
If $\ovl{e} \in \ovl{G}$ is a $k$-tuple satisfying
\begin{enumerate}
    \item[(a)] For every $S\subseteq \ovl{e}$ with $|S|\leq k-1$ and $S\notin H$, the event $A_S$ holds,
    \item[(b)] For every $S\subseteq \ovl{e}$ with $|S|\leq k-2$, the event $B_S$ holds,
    \item[(c)] $\ovl{e}\subseteq W_j$,
\end{enumerate}
then $\ovl{e}\subseteq I_j$. 
\end{proposition}

\begin{proof}Suppose that $\ovl{e}\not\subseteq I_j$. This means that a vertex of $\ovl{e}$ was deleted while performing operations (i) and (ii), i.e., in our cleaning process we deleted a vertex from an edge $g\in G$ with $g\cap\ovl{e}\neq \emptyset$. Since (a) holds, we obtain that $g\cap\ovl{e}$ is an edge of $H$. Moreover, by (b), we have that $g\cap \ovl{e}$ is the only $(k-1)$-tuple of $g$ in $H$. Therefore, we deleted a vertex of $g$ in the operation (i). By the definition of operation (i), we obtain that the deleted vertex was in $g- \ovl{e}$, which contradicts our assumption that the deleted vertex belongs to $\ovl{e}$.
\end{proof}

As a consequence of Proposition \ref{prop:events}, one can estimate the probability of $\ovl{e}\subseteq I_j$ by
\begin{align}\label{eq:probIj}
    \PP(\ovl{e}\subseteq I_j)\geq \PP\left(\{\ovl{e}\subseteq W_j\}\wedge \bigwedge_{S\in \binom{\ovl{e}}{k-1}\setminus H} A_S \wedge \bigwedge_{S\in \binom{\ovl{e}}{k-2}} B_S\right)=p^k\PP\left(\bigwedge_{S\in \binom{\ovl{e}}{k-1}\setminus H} A_S \wedge \bigwedge_{S\in \binom{\ovl{e}}{k-2}} B_S\right).
\end{align}

To compute the probability of the intersection of events $A_S$ and $B_S$, we will estimate the probability of the complementary events $A_S^{c}$ and $B_S^{c}$. Set $\varepsilon=\frac{1}{2^{k+2}}$ and recall that $\delta=\frac{1}{(k-1)2^{k+2}}$. We split the computations into cases depending on the size of $S$:

\noindent\underline{Case 1:} $|S|\leq k-2$.

By using the definitions of $A_S$ in (\ref{eq:AS}) we obtain that
\begin{align}
    \PP(A_S^c)&=\PP\left(\bigvee_{R\cup S\in G} \{R\subseteq W_j-\ovl{e}\}\right) \leq \sum_{R\cup S\in G}\PP(R\subseteq W_j-\ovl{e}) \nonumber\\
    &\leq p^{k-|S|}\deg_G(S) \leq p^2d=\delta^2\leq \epsilon\label{eq:ASc}.
\end{align}
Proposition \ref{prop:auxiliarydeg} and the definition of $B_S$ in (\ref{eq:BS}) give us that
\begin{align}
    \PP(B_S^c)&=\PP\left(\bigvee_{R\cup S\in H} \{R\subseteq W_j-\ovl{e}\}\right) \leq \sum_{R\cup S\in H}\PP(R\subseteq W_j-\ovl{e}) \nonumber\\
    &\leq p^{k-1-|S|}\deg_H(S) \leq p\Delta(H)=(k-1)\delta= \epsilon\label{eq:BSc}.
\end{align}

\noindent\underline{Case 2:} $|S|=k-1$ and $S\notin H$.

By the definition of $A_S$ in (\ref{eq:AS}) and by the fact that $S\notin H$ implies that $\deg_G(S)\leq \sqrt{d}$, we obtain that
\begin{align}
    \PP(A_S^c)&=\PP\left(\bigvee_{R\cup S\in G} \{R\subseteq W_j-\ovl{e}\}\right) \leq \sum_{R\cup S\in G}\PP(R\subseteq W_j-\ovl{e}) \nonumber\\
    &\leq p^{k-|S|}\deg_G(S) \leq p\sqrt{d}=\delta\leq \epsilon\label{eq:ASc2}.
\end{align}

Finally, since there exist at most $2^k$ choices of $S\subseteq \ovl{e}$, by putting (\ref{eq:probIj}), (\ref{eq:ASc}), (\ref{eq:BSc}) and (\ref{eq:ASc2}) together we obtain that
\begin{align*}
    \PP(\ovl{e}\subseteq I_j)\geq p^k\left(1-\sum_{S\in \binom{\ovl{e}}{k-1}\setminus H} \PP(A_S^c)-\sum_{S\in \binom{\ovl{e}}{k-2}}\PP(B_S^c)\right)\geq p^k(1-2^{k+1}\epsilon)\geq \frac{p^k}{2}=\frac{1}{2}\left(\frac{\delta}{\sqrt{d}}\right)^k.
\end{align*}
\end{proof}

Now the remaining part of the proof of Proposition \ref{prop:uppergeneral} is straightforward. By Lemma \ref{lem:probIj}, the probability that $\ovl{e}$ is not covered by $\bigcup_{j=1}^tI_j$ is given by
\begin{align*}
    \PP\left(\bigwedge_{j=1}^t (\ovl{e}\not\subseteq I_j)\right)=\prod_{j=1}^t\PP\left(\ovl{e}\not\subseteq I_j\right)<\left(1-\frac{1}{2}\left(\frac{\delta}{\sqrt{d}}\right)^k\right)^t \leq \exp\left(-\frac{t}{2}\left(\frac{\delta}{\sqrt{d}}\right)^k\right)=\frac{1}{n^k}. 
\end{align*}
Since there are at most $\binom{n}{k}$ non-edges in $\ovl{G}$ to cover, the probability that one remains uncovered is at most $\binom{n}{k}\frac{1}{n^k}<1$. Consequently, with positive probability all edges of $\ovl{G}$ are covered by $\bigcup_{j=1}^t I_j$.
\end{proof}

\section{Lower Bounds of Theorems \ref{th:balanced} and \ref{th:generalcase}}\label{sec:lower}

In this section, we prove the lower bound of Theorems \ref{th:balanced} and \ref{th:generalcase}. The next proposition shows the lower bound of Theorem \ref{th:balanced} and Theorem \ref{th:generalcase} when $k$ is odd.

\begin{proposition}\label{prop:lower}
For $n,d,k\geq 2$, there exists a $d$-balanced $k$-graph $G$ such that
\begin{align*}
\vartheta(G)\geq c\frac{d^{\frac{k}{k-1}}}{\log d}\log \left(\frac{n^{k-1}}{d}\right)
\end{align*}
for a positive constant $c$ depending only on $k$.
\end{proposition}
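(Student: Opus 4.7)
The plan is to realize the lower bound by a random construction: take $G \sim G^{(k)}(n, p)$ with $p = c_0 d / n^{k-1}$ for a small constant $c_0 = c_0(k)$. First I would verify that $G$ is $d$-balanced with probability at least $1/2$: for each $1 \leq i \leq k-1$, the degree of a fixed $i$-subset is $\Bin(\binom{n-i}{k-i}, p)$ with mean $O(d^{(k-i)/(k-1)})$, so Chernoff combined with a union bound over the $\binom{n}{i}$ subsets gives $\Delta_i(G) \leq d^{(k-i)/(k-1)}$ for every $i$ with probability tending to $1$.

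The crucial step is to bound $\vartheta(G)$ from below. Since $\vartheta(G) \leq t$ holds precisely when $G$ equals the derived $k$-graph
\[
G_{\mathcal{F}} := \Big\{e \in \binom{V}{k} : e \not\subseteq I_j \text{ for all } j\Big\}
\]
for some family $\mathcal{F} = (I_1, \dots, I_t)$, the first-moment method gives
\[
\PP(\vartheta(G) \leq t) \leq \sum_{\mathcal{F}} p^{e_{\mathcal{F}}}(1-p)^{\overline{e}_{\mathcal{F}}},
\]
where $\overline{e}_{\mathcal{F}} := |\bigcup_j \binom{I_j}{k}|$ and $e_{\mathcal{F}} := \binom{n}{k} - \overline{e}_{\mathcal{F}}$. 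I would stratify the sum by the profile $(|I_1|, \dots, |I_t|)$, restricting to $|I_j| \leq \alpha_0 := C n (\log d/d)^{1/(k-1)}$ since larger sets are almost surely not independent in $G$. Bounding the number of such families by $\prod_j \binom{n}{|I_j|}$, using $e_{\mathcal{F}} \geq \binom{n}{k} - \sum_j \binom{|I_j|}{k}$, and comparing the combinatorial cost to the probabilistic gain should yield $\PP(\vartheta(G) \leq t) < 1/2$ as soon as $t \leq c_1 d^{k/(k-1)} \log(n^{k-1}/d)/\log d$.

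The main obstacle is performing this final estimate tightly enough to recover $\log d$ in the denominator, rather than the weaker $(\log d)^{k/(k-1)}$ that a naive union bound produces. Closing this gap seems to require exploiting a structural constraint on $\mathcal{F}$: since $\Delta(G_{\mathcal{F}}) \leq d$, at every vertex $v$ the link family $\{I_j - v : v \in I_j\}$ must cover all but $O(d)$ elements of $\binom{V \setminus \{v\}}{k-1}$, which cuts down the effective number of admissible families. An alternative route is induction on $k$ with the graph bound of \cite{ER} as the base case: each link $G_v$ is a $d^{(k-2)/(k-1)}$-balanced $(k-1)$-graph, so by induction $\vartheta(G_v) \gtrsim d \log(n^{k-1}/d)/\log d$; combined with $\sum_v \vartheta(G_v) \leq \sum_j |I_j| \leq t\, \alpha(G)$ this gives a bound of comparable strength which can then be sharpened via the counting argument above.
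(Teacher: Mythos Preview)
Your counting framework is the right idea and matches the paper's spirit, but the random hypergraph model cannot deliver the stated bound, and neither of your proposed repairs closes the gap.

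The core difficulty you identify is real and fatal for $G\sim G^{(k)}(n,p)$: at density $p\asymp d/n^{k-1}$ the independence number satisfies $\alpha(G)\asymp n(\log d/d)^{1/(k-1)}$, and the extra $(\log d)^{1/(k-1)}$ in $\alpha$ is exactly what turns the denominator into $(\log d)^{k/(k-1)}$. Your fix (b) via links does not work: the inductive hypothesis asserts only that \emph{some} $d'$-balanced $(k-1)$-graph has large $\vartheta$, not that every one does, so you cannot conclude $\vartheta(G_v)\gtrsim d\log(n^{k-1}/d)/\log d$ for the particular links of your $G$. Indeed, for the random $G$ each link $G_v$ has only $O(d)$ edges and its $\vartheta$ is tiny, so the inequality $\sum_v \vartheta(G_v)\le t\,\alpha(G)$ yields nothing useful. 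Fix (a) is too vague to evaluate; the structural constraint you describe is genuine but it is not clear how to convert it into the required $\log d$ saving.

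The paper avoids the obstacle entirely by \emph{planting} the right independence number rather than inheriting it from randomness. One partitions $[n]=V_1\cup\cdots\cup V_k$, fills each $V_i$ with vertex-disjoint cliques $K_p^{(k)}$ of size $p=(d/2)^{1/(k-1)}$, and then superimposes an arbitrary linear $k$-partite $k$-graph $F\in\cF_{n/k,\,d/2}^{(k)}$. The cliques force $\alpha(H)\le (k-1)n/p$ with no logarithmic loss, the linearity of $F$ keeps every $H$ in the family $d$-balanced, and a direct count shows $|\cF_{n/k,\,d/2}^{(k)}|\ge (n^{k-1}/d)^{\Omega(nd)}$. Comparing this with the number $\bigl(2\binom{n}{(k-1)n/p}\bigr)^{t}$ of possible $t$-representations by sets of size at most $(k-1)n/p$ gives $t\ge c\,d^{k/(k-1)}\log(n^{k-1}/d)/\log d$ immediately. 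The clique-planting step is the missing idea in your proposal.
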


The proof of Proposition \ref{prop:lower} follows from a counting argument using the next two auxiliary lemmas. Recall that a $t$-representation of a $k$-graph $H$ is a system $\cI=\{I_j\subseteq [n]:\: 1\leq j \leq t\}$ covering the edges of the complement $\ovl{H}$. For integers $n,\alpha, t\geq 1$, define $f(n,\alpha,t)$ as the number of all possible $t$-representations of graphs $H$ with independence number $\alpha(H)\leq \alpha$ (If some graph $H$ does not admit a $t$-representation, then its contribution to the sum is zero).

\begin{lemma}\label{lem:representationnum}
For $\alpha,t \geq 1$ and sufficiently large $n$, $f(n,\alpha,t)\leq \left(2\binom{n}{\alpha}\right)^{t}$.
\end{lemma}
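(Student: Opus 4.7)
The plan is to count $t$-representations using the key observation that each system $\cI = \{I_j : 1 \leq j \leq t\}$ determines the underlying hypergraph $H$ uniquely. Indeed, if $\cI$ is a $t$-representation of $H$, then on the one hand each $I_j$ is an independent set of $H$, so every $k$-subset of $I_j$ is a non-edge of $H$; on the other hand every non-edge of $H$ is contained in some $I_j$. Hence $\ovl{H}$ is forced to equal the $k$-graph with edge set $\{e \in \binom{[n]}{k} : e \subseteq I_j \text{ for some } j\}$, and so $H$ is determined by $\cI$. This means counting pairs $(H, \cI)$ reduces to counting the tuples $\cI$ alone, without an extra factor for the choice of~$H$.

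Next I would use the hypothesis $\alpha(H) \leq \alpha$: since each $I_j$ is independent in $H$, it has $|I_j| \leq \alpha$, so each coordinate of $\cI$ can be chosen from at most $\sum_{i=0}^{\alpha} \binom{n}{i}$ subsets of $[n]$. A routine geometric estimate based on the ratio $\binom{n}{i-1}/\binom{n}{i} = i/(n-i+1)$ shows that for $n$ large enough relative to $\alpha$ (say, $n \geq 3\alpha$) the tail terms decay geometrically with ratio at most $1/2$, yielding $\sum_{i=0}^{\alpha} \binom{n}{i} \leq 2\binom{n}{\alpha}$. Raising this to the $t$-th power gives $f(n,\alpha,t) \leq (2\binom{n}{\alpha})^t$, as claimed.

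The entire argument is essentially a one-line counting estimate once the uniqueness observation is made, so there is no real obstacle. The only conceptual subtlety is recognising that the representation $\cI$ already encodes $H$; without this, one would naively need to multiply by the number of admissible hypergraphs $H$, which would ruin the bound.
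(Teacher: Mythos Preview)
Your proposal is correct and follows essentially the same approach as the paper: bound each $|I_j|$ by $\alpha$ using the independence-number hypothesis, then count the possible tuples $(I_1,\ldots,I_t)$ by $\bigl(\sum_{i\le\alpha}\binom{n}{i}\bigr)^t\le \bigl(2\binom{n}{\alpha}\bigr)^t$. The one expository difference is that you make explicit here the observation that the system $\cI$ determines $H$ uniquely; the paper leaves this implicit in the lemma's proof and only invokes it later, when applying the lemma (``two graphs $H,H'\in\cH$ cannot be represented by the same system'').
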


\begin{proof}
Let $\cI=\{I_1,\ldots,I_t\}$ be a set system covering $\ovl{H}$ for some $H$. Since $I_j$ is an independent set of $H$ and $\alpha(H)\leq \alpha$, we obtain that $|I_j|\leq \alpha$. Hence, the number of ways to choose distinct systems $\cI$ is bounded by
\begin{align*}
    f(n,\alpha,t)\leq \left(\sum_{i=1}^\alpha \binom{n}{i}\right)^t\leq \left(2\binom{n}{\alpha}\right)^{t}.
\end{align*}
\end{proof}

Given integers $m,d,k$, we define $\cF_{m,d}^{(k)}$ to be a family of $k$-partite $k$-graphs $F$ on a fixed set of vertices $V=\bigcup_{i=1}^k V_i$ with $|V_i|=m$ for every $1\leq i \leq k$ satisfying:
\begin{enumerate}
    \item[i)] All edges $e \in F$ are transversal to the partition $V_1\cup\ldots \cup V_k$, i.e., $|e\cap V_i|=1$ for all $1\leq i \leq k$.
    \item[ii)] $F$ is linear.
    \item[iii)] $\Delta(F)\leq d$.
\end{enumerate}

Note that since $F$ is linear, the maximum degree satisfies $\Delta(F)\leq d \leq |V_i|=m$. The following lemma gives a lower bound on the size of $\cF_{m,d}^{(k)}$.

\begin{lemma}\label{lem:sizeofFmd}
Let $k,d,m$ be integers with $k\geq 3$ and $m\geq m_0(k)$. Then,
\begin{align*}
    \left|\cF_{m,d}^{(k)}\right|\geq \left(\frac{m^{k-1}}{d}\right)^{\frac{md}{2k^2}}
\end{align*}
holds.
\end{lemma}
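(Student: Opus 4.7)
The plan is to bound $|\cF_{m,d}^{(k)}|$ from below by counting sequences of edges that build hypergraphs in this family. Set $s=\lfloor md/(2k^2)\rfloor$. I will construct $F$ by sequentially selecting transversal $k$-tuples $e_1,\ldots,e_s$ while maintaining linearity and the degree bound $\Delta\le d$ at every step. If at each step there are at least $m^k/2$ admissible transversals to choose from, then the number of valid sequences is at least $(m^k/2)^s$, and since each $F\in\cF_{m,d}^{(k)}$ of size $s$ corresponds to at most $s!$ sequences, the bound
\begin{equation*}
|\cF_{m,d}^{(k)}|\geq\frac{(m^k/2)^s}{s!}\geq\left(\frac{m^k}{2s}\right)^s=\left(\frac{k^2m^{k-1}}{d}\right)^s\geq\left(\frac{m^{k-1}}{d}\right)^{md/(2k^2)}
\end{equation*}
follows, using $s!\le s^s$ and absorbing the floor and the $k^2$ slack into the final exponent for $m\ge m_0(k)$.

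The content of the argument is therefore the $m^k/2$ bound on the number of admissible transversals at step $i+1\le s$. A transversal $e$ is inadmissible for one of two reasons: (i) $|e\cap e_j|\geq 2$ for some previously chosen $e_j$, breaking linearity, or (ii) $e$ contains a vertex whose degree in $\{e_1,\ldots,e_i\}$ has already reached $d$. Note that (i) also rules out any repetition $e=e_j$ (since $|e_j\cap e_j|=k\ge 3$), so distinctness of the edges in the sequence comes for free. For (i), each $e_j$ blocks at most $\binom{k}{2}m^{k-2}$ transversals by the union bound over the $\binom{k}{2}$ pairs of coordinates in which $e$ might agree with $e_j$, so the total is at most
\begin{equation*}
s\binom{k}{2}m^{k-2}\leq\frac{(k-1)m^{k-1}d}{4k}\leq\frac{m^k}{4},
\end{equation*}
where the last inequality uses $d\le m$ noted immediately after the definition of $\cF_{m,d}^{(k)}$. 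For (ii), since each part $V_\ell$ has degree sum $i$ after $i$ insertions, at most $s/d=m/(2k^2)$ vertices per part can be saturated, and at most $m/(2k)$ overall; each contributes $m^{k-1}$ blocked transversals, for a total of at most $m^k/(2k)$. Adding, at most $m^k/4+m^k/(2k)\leq m^k/2$ transversals are inadmissible for $k\ge 3$.

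The only obstacles I expect are cosmetic: the reliance on $d\le m$, which rests on the $k$-partite linearity of $F$ (if $v\in V_1$ has degree $d$, the $d$ edges through $v$ must contribute $d$ pairwise distinct vertices to each $V_i$, forcing $d\le m$), and the mild arithmetic to absorb the floor of $s$ and the $k^2$ slack into the final exponent using $m\ge m_0(k)$. Beyond that, the argument is a routine union bound combined with the Stirling-type estimate $s!\le s^s$, and no further ideas are needed.
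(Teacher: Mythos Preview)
Your proof is correct and follows essentially the same approach as the paper: build $F$ greedily with $s=md/(2k^2)$ edges, bound the number of admissible transversals at each step by $m^k/2$ using $d\le m$ and the per-part degree-sum bound, then divide by $s!$ at the end. The only cosmetic differences are that the paper counts admissible transversals by first restricting to the product $\prod_i|V_i\setminus X_i|$ and then subtracting the linearity violations (rather than your union bound), and it handles the final arithmetic slightly differently; neither affects the argument.
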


\begin{proof}
Set $s=\frac{dm}{2k^2}$. We will construct a graph $F$ from $\cF_{m,d}^{(k)}$ by successively choosing edges $e_1,\ldots, e_s$ transversal to the partition $V_1\cup \ldots \cup V_k$. Suppose that for $\ell <s$ we already constructed $F_{\ell}=\{e_1,\ldots, e_{\ell}\}$ satisfying:

\begin{enumerate}
    \item[i)] $e_i$ is transversal to $V_1\cup \ldots \cup V_k$ for all $1\leq i \leq \ell$.
    \item[ii)] $F_{\ell}$ is linear.
    \item[iii)] $\Delta(F_{\ell})\leq d$.
\end{enumerate}

Now we intend to add an edge $e_{\ell+1}$ to construct the next graph $F_{\ell+1}=F_\ell\cup \{e_{\ell+1}\}$. For each $1\leq i \leq k$, we have
\begin{align*}
    \sum_{x\in V_i}\deg_{F_\ell}(x)=\ell<s= \frac{dm}{2k^2}.
\end{align*}
Consequently, if $X_i=\{x\in V_i:\: \deg_{F_\ell}(x)=d\}$, then we have
\begin{align}\label{eq:sizeofXi}
    |X_i|\leq \frac{m}{2k^2}.
\end{align}

To choose an edge $e_{\ell+1}$ we need to satisfy $|e_i\cap e_{\ell+1}|\leq 1$ for every $1\leq i \leq \ell$. Given $e_i$, there are at most $\binom{k}{2}m^{k-2}$ $k$-tuples $f\in V_1\times\ldots \times V_k$ such that $|e_i\cap f|\geq 2$. Therefore, since $d\leq m$, there are at most 
\begin{align*}
    \ell\binom{k}{2}m^{k-2}<s\binom{k}{2}m^{k-2}\leq \frac{dm^{k-1}}{4}\leq \frac{m^k}{4}
\end{align*}
$k$-tuples of $V_1\times \ldots \times V_k$ that violate condition ii) of $\cF_{m,d}^{(k)}$. Since the addition of any $k$-tuple containing an element of $X_i$ violates condition iii) of $\cF_{m,d}^{(k)}$, we obtain by (\ref{eq:sizeofXi}) that there are at least
\begin{align*}
    \prod_{i=1}^k |V_i\setminus X_i|-\frac{m^k}{4}\geq m^k\left(\left(1-\frac{1}{2k^2}\right)^k-\frac{1}{4}\right)\geq m^k\left(1-\frac{1}{2k}-\frac{1}{4}\right)\geq \frac{m^k}{2}
\end{align*}
valid choices for $e_{\ell+1}$.

Thus there exist at least $\left(m^k/2\right)^s$ sequences of edges $e_1,\ldots, e_s$ forming a graph $F=\{e_1,\ldots, e_s\}$ satisfying conditions i), ii) and iii) of $\cF_{m,d}^{(k)}$. Since the same graph can be obtained by at most $s!$ of these sequences, we have
\begin{align*}
    \left|\cF_{m,d}^{(k)}\right|\geq \frac{\left(\frac{m^k}{2}\right)^s}{s!}\geq \frac{1}{2}\left(\frac{m^{k}}{s}\right)^s\geq \left(\frac{m^{k-1}}{d}\right)^{\frac{md}{2k^2}}.
\end{align*}
\end{proof}

\begin{proof}[Proof of Proposition \ref{prop:lower}]
We will construct a family of $k$-graphs $\cH$ on $n$ vertices of maximum degree $d$ and small independence number. To this end, set $p=\left(d/2\right)^{\frac{1}{k-1}}$ and consider a partition $[n]=V_1\cup\ldots \cup V_k$ with each $|V_i|=n/k$. For each $1\leq i \leq k$, let $H_i$ be the $k$-graph with vertex set $V_i$ consisting of the union of $n/kp$ vertex disjoint cliques $K_{p}^{(k)}$ of size $p$. Let 
\begin{align*}
    \cH=\{F\cup \bigcup_{i=1}^k H_i:\: F\in \cF_{n/k,d/2}^{(k)}\}
\end{align*}
be the family of $k$-graphs obtained by adding a $k$-graph $F\in \cF_{n/k,d/2}^{(k)}$ with $k$-partition $V_1\cup \ldots \cup V_k$ to the union $\bigcup_{i=1}^k H_i$. By Lemma \ref{lem:sizeofFmd}, we have
\begin{align}\label{eq:sizeofcH}
    |\cH|=|\cF|\geq \left(\frac{2}{d}\left(\frac{n}{k}\right)^{k-1}\right)^{\frac{dn}{4k^3}}.
\end{align}

We claim that $H=F\cup \bigcup_{i=1}^k H_i$ is $d$-balanced for every $H\in \cH$. First, note that if $x\in V(H)$, then $x\in V_i$ for some $1\leq i \leq k$ and consequently
\begin{align*}
    \deg_H(x)=\deg_{H_i}(x)+\deg_F(x)\leq \binom{p-1}{k-1}+\frac{d}{2}\leq \frac{d}{2}+\frac{d}{2}=d.
\end{align*}
Thus $\Delta(H)\leq d$.

Now let $S\in [n]^{(\ell)}$ for $2\leq \ell \leq k-1$. Note that $S\subseteq e$ for an edge $e\in \bigcup_{i=1}^k H_i$ if and only if $S$ is a subset of vertices of some $K_{p}^{(k)}$. Consequently, if $S$ is a subset of the vertex set of some $K_{p}^{(k)}$, then $S\subseteq V_i$ for some $1\leq i \leq k$. Thus, by condition i) of $\cF_{n/k,d/2}^{(k)}$ we have that $\deg_F(S)=0$. Hence,
\begin{align*}
    \deg_H(S)\leq \binom{p-\ell}{k-\ell}\leq d^{\frac{k-\ell}{k-1}}.
\end{align*}
Otherwise, if $S$ is not a subset of $K_p^{(k)}$ then
\begin{align*}
    \deg_H(S)=\deg_F(S)\leq 1,
\end{align*}
since $F$ is linear. Therefore, $H$ is a $d$-balanced $k$-graph.

Finally, we turn our attention to the independence number of $H \in \cH$. Note that $\bigcup_{i=1}^k H_i$ is a union of $n/p$ vertex disjoint copies of $K_p^{(k)}$. Thus, since $\bigcup_{i=1}^k H_i\subseteq H$, we have that
\begin{align}\label{eq:indepsizeofH}
    \alpha(H)\leq \alpha\left(\bigcup_{i=1}^k H_i\right) \leq (k-1)\frac{n}{p}
\end{align}
for every $H\in \cH$.

Let $t$ be the minimum integer such that it is possible to $t$-represent any element $H\in \cH$, i.e., $t$ is the minimum integer such that for any $H\in \cH$, there exists a system of independent sets $\{I_j:\: 1\leq j \leq t\}$ with the property that every edge in $\ovl{H}$ is covered by some $I_j$. Lemma \ref{lem:representationnum} applied with (\ref{eq:indepsizeofH}) gives us that there exists at most
\begin{align*}
    \left(2\binom{n}{(k-1)n/p}\right)^t\leq (cp)^{(k-1)nt/p}
\end{align*}
ways to $t$-represent the family $\cH$ for some positive constant $c$ depending on $k$.

Since two graphs $H, H' \in \cH$ can not be represented by the same system $\{I_j:\: 1\leq j \leq t\}$ we obtain that
\begin{align*}
    (cp)^{(k-1)nt/p}\geq |\cH|\geq \left(\frac{2}{d}\left(\frac{n}{k}\right)^{k-1}\right)^{\frac{dn}{4k^3}}.
\end{align*}
Hence, by the fact that $p=(d/2)^{\frac{1}{k-1}}$ we obtain that
\begin{align*}
    t\geq c\frac{d^{\frac{k}{k-1}}}{\log d}\log\left(\frac{n^{k-1}}{d}\right)
\end{align*}
for a positive constant $c$ depending only on $k$. That is, there exists a $d$-balanced $k$-graph $H \in \cH$ such that 
\begin{align*}
    \vartheta(H)\geq c\frac{d^{\frac{k}{k-1}}}{\log d}\log\left(\frac{n^{k-1}}{d}\right)
\end{align*}
\end{proof}

For $k$ even, we can further improve the lower bound for $k$-graphs of bounded maximum degree $d$.

\begin{proposition}\label{prop:evenlb}
For $n, d, k \geq 2$ and $k$ even, there exists $k$-graph $G$ with $\Delta(G)\leq d$ such that
\begin{align*}
    \vartheta(G)\geq c\frac{d^2}{\log d}\log\left(\frac{2n}{kd}\right)
\end{align*}
for a positive constant $c$.
\end{proposition}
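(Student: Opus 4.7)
The plan is to reduce the even-$k$ case to the graph case ($k=2$) via a blow-up construction, thereby inheriting the classical Erdős--Rényi lower bound. Write $k = 2\ell$ and set $m = \lfloor 2n/k \rfloor$. First I would invoke the graph lower bound (Proposition~\ref{prop:lower} with $k=2$, equivalently the Erdős--Rényi bound cited in the introduction) to obtain a graph $G'$ on vertex set $[m]$ with $\Delta(G') \leq d$ and
\[
\vartheta(G') \;\geq\; c\,\frac{d^2}{\log d}\,\log\!\left(\frac{m}{d}\right) \;\geq\; c'\,\frac{d^2}{\log d}\,\log\!\left(\frac{2n}{kd}\right)
\]
for suitable positive constants $c, c'$ depending only on $k$.

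Next I would fix pairwise disjoint blocks $A_1, \ldots, A_m \subseteq [n]$ with $|A_i| = \ell$ and define the $k$-uniform hypergraph $G$ on $[n]$ by declaring a $k$-set to be an edge of $G$ exactly when it has the form $A_i \cup A_j$ for some edge $\{i,j\} \in G'$. Every edge of $G$ has size $2\ell = k$ and meets precisely two blocks; consequently each vertex $x \in A_i$ lies in exactly $\deg_{G'}(i)$ edges of $G$, so $\Delta(G) = \Delta(G') \leq d$.

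The main step is then to prove $\vartheta(G) \geq \vartheta(G')$ by a projection argument. Given any cover of $\ovl{G}$ by independent sets $I_1, \ldots, I_t$ of $G$, define
\[
J_s \;=\; \{\,i \in [m] : A_i \subseteq I_s\,\}, \qquad 1 \leq s \leq t.
\]
Each $J_s$ is independent in $G'$: if $\{i,j\} \in G'$ had $i,j \in J_s$, then $A_i \cup A_j \subseteq I_s$ would be an edge of $G$ contained in $I_s$, a contradiction. Moreover $\{J_s\}$ covers every non-edge of $G'$: if $\{i,j\} \notin G'$, then $A_i \cup A_j \in \ovl{G}$ lies in some $I_s$, forcing $\{i,j\} \subseteq J_s$. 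Hence the $J_s$ form a cover of $\ovl{G'}$ by independent sets of $G'$, so $t \geq \vartheta(G')$, which delivers the desired lower bound on $\vartheta(G)$.

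The argument is essentially a reduction, so I do not expect a serious obstacle; the only point that warrants care is the projection step, and both halves of it follow at once from the rigid correspondence between unordered pairs $\{A_i, A_j\}$ and those $k$-subsets of $[n]$ that decompose as the union of two entire blocks. A secondary technical point is that Proposition~\ref{prop:lower} is stated for $k\ge 2$ but its proof appeals to Lemma~\ref{lem:sizeofFmd}, which requires $k\ge 3$; in the $k=2$ application one simply falls back on the classical lower bound from \cite{ER}.
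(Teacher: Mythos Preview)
Your proposal is correct and follows essentially the same approach as the paper: both write $k=2\ell$, invoke the $k=2$ lower bound to obtain a graph $G'$ (the paper calls it $F$) on roughly $n/\ell$ vertices, blow up each vertex into a block of size $\ell$, and then project an independent-set cover of $\ovl{G}$ back to one of $\ovl{G'}$ via $J_s=\{i:A_i\subseteq I_s\}$. Your remark that the $k=2$ instance of Proposition~\ref{prop:lower} formally rests on Lemma~\ref{lem:sizeofFmd} (stated for $k\ge 3$) and should be backed by the classical \cite{ER} bound is a fair technical observation that the paper glosses over.
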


\begin{proof}
Suppose that $k=2\ell$ for some integer $\ell$. Proposition \ref{prop:lower} says that there exists a $2$-graph $F$ on $n/\ell$ vertices with $\Delta(F)\leq d$ such that
\begin{align*}
    \vartheta(F)\geq c\frac{d^2}{\log d}\log\left(\frac{n}{\ell d}\right).
\end{align*}

We will construct a $k$-graph $G$ with $\Delta(G)\leq d$ satisfying the inequality of the statement as follows: Let $V(F)=[n/\ell]$. For each $i \in [n/\ell]$, let $V_i=\{v_{i,1},\ldots, v_{i,\ell}\}$ be a set consisting of $\ell$ copies of the vertex $i$. We define $V(G)=\bigcup_{i=1}^{n/\ell} V_i$ and
\begin{align*}
    E(G)=\{V_i\cup V_j:\: \{i,j\} \in F\}.
\end{align*}
That is, the edges of $G$ are the $2\ell$-tuples of the form $V_i\cup V_j$ where $i$ and $j$ are adjacent in $F$. We will prove that $G$ is our desired graph. First note that if $x\in V_i$ for some $i\in [n/\ell]$, then $\deg_G(x)=\deg_F(i)$. Thus $\Delta(G)=\Delta(F)\leq d$.

Now suppose that $G$ can be $t$-represented and let $\cI=\{I_j:1\leq j\leq t\}$ be the system of independent sets covering the edges of $\ovl{G}$. For each $1\leq j \leq t$, we consider the subset $\tilde{I_j}\subseteq V(F)$ given by
\begin{align*}
    \tilde{I_j}=\{i\in [n/\ell]:\: V_i\subseteq I_j\}.
\end{align*}
That is, $\tilde{I_j}$ consists of all vertices $i\in [n/\ell]$ such that $V_i$ is fully contained in $I_j$. Note that $\tilde{I_j}$ is an independent set. Indeed, suppose to the contrary that there exist $i, i' \in \tilde{I_j}$ adjacent in $F$. 
Then, on one hand, by the definition of $\tilde{I_j}$, we have that $V_i\cup V_{i'} \subseteq I_j$. On the other hand, by the definition of $G$, we have that $V_i\cup V_{i'} \in G$, which contradicts the fact that $I_j$ is independent.

We claim that $\tilde{\cI}=\{\tilde{I_j}:\:1\leq j \leq t\}$ is a system of independent sets in $F$ which covers the edges of $\ovl{F}$. In particular, this proves that $\vartheta(G)\geq \vartheta(F)$. Let $\{i,i'\}\in \ovl{F}$ be a non-adjacent pair of vertices in $V(F)$. Then, by the construction of $G$, the $2\ell$-tuple $V_i\cup V_{i'}$ is an edge of $\ovl{G}$. Since $\{I_j:\: 1 \leq j \leq t\}$ covers the edges of $\ovl{G}$, there exists $I_j$ such that $V_i\cup V_{i'}\subseteq I_j$. Thus $\{i,i'\}\subseteq \tilde{I_j}$ and consequently $\{\tilde{I_j}:\: 1\leq j \leq t\}$ covers $\ovl{F}$. Therefore, we obtained a $k$-graph $G$ with $\Delta(G)\leq d$ such that
\begin{align*}
    \vartheta(G)\geq \vartheta(F)\geq c\frac{d^2}{\log d}\log\left(\frac{2n}{kd}\right).
\end{align*}
\end{proof}

\section{Special hypergraphs}\label{sec:steiner}

While for $k$-graphs of bounded degree with $k$ large we still have a significant gap between lower and upper bounds, for Steiner systems one can obtain more precise bounds. Given $1<\ell<k<n$, a \textit{partial Steiner $(n,k,\ell)$-system} is a $k$-graph $S\subseteq [n]^{(k)}$ such that every $\ell$-tuple $P\in [n]^{(\ell)}$ is contained in at most one edge of $S$. An $(n,k,\ell)$-system in which every $\ell$-tuple $P\in [n]^{(\ell)}$ is contained in precisely one edge of $S$ is called a \textit{full Steiner $(n,k,\ell)$-system}. While it is easy to check the existence of partial $(n,k,\ell)$-system, the existence of full $(n,k,\ell)$-system for all admissible parameters $1<\ell<k$ and $n\geq n_0(\ell,k)$ was established only recently in \cite{GKLO} and \cite{K}. In this section we are going to provide bounds for the representation number of partial and full Steiner $(n,k,\ell)$-systems for $\ell=2$.

For $1<\ell<k<n$, we define
\begin{align*}
    s(n,k,\ell)=\max\{\vartheta(S):\: S \text{ is a partial $(n,k,\ell)$-system}\}
\end{align*}
as the maximum value of $\vartheta(S)$, where $S$ runs over all partial $(n,k,\ell)$-systems. Similarly, we define $s^*(n,k,\ell)$ as the maximum value of $\vartheta(S)$, where $S$ runs over all full $(n,k,\ell)$-systems.

\begin{theorem}\label{th:steiner}
Given $k>1$, there exist positive constants $c_1$ and $c_2$ depending only on $k$ such that the following holds:
\begin{enumerate}
    \item[i)] For $n$ sufficiently large,
    \begin{align*}
        c_1\frac{n^{\frac{k}{k-1}}}{(\log n)^{\frac{1}{k-1}}}\leq s(n,k,2) \leq c_2 n^{\frac{k}{k-1}}\log n
    \end{align*}

    \item[ii)] For infinitely many $n$,
    \begin{align*}
        c_1\frac{n^{\frac{k}{k-1}}}{(\log n)^{\frac{1}{k-1}}}\leq s^{*}(n,k,2) \leq c_2 n^{\frac{k}{k-1}}\log n
    \end{align*}
\end{enumerate}
\end{theorem}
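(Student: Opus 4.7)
The upper bound for both parts follows immediately from Proposition~\ref{prop:upperbalanced}. Any partial Steiner $(n,k,2)$-system $S$ is linear, so $\Delta_i(S)\le 1$ for all $2\le i\le k-1$ and $\Delta_1(S)\le (n-1)/(k-1)$; consequently $S$ is $d$-balanced with $d=(n-1)/(k-1)$, and Proposition~\ref{prop:upperbalanced} gives $\vartheta(S)\le c_2\,d^{k/(k-1)}\log n=O(n^{k/(k-1)}\log n)$.

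For the lower bound in part~i), the plan is to adapt the counting strategy of Proposition~\ref{prop:lower}, replacing the clique-components $H_i$ (which would destroy linearity) by partial Steiner subsystems of near-optimal independence number. Concretely, standard random / R\"odl-nibble constructions together with the Ajtai--Koml\'os--Pintz--Spencer--Szemer\'edi-type upper bound on the independence number of linear hypergraphs yield, for every $m\ge m_0(k)$, a partial Steiner $(m,k,2)$-system $T_m$ with $\alpha(T_m)\le C_k\,m^{(k-2)/(k-1)}(\log m)^{1/(k-1)}$. Partition $[n]=V_1\cup\cdots\cup V_k$ with $|V_i|=n/k$, install a copy of $T_{n/k}$ on each $V_i$, set $T=T_1\cup\cdots\cup T_k$, and for $d=\lfloor n/(2k^2)\rfloor$ form the family
\[
\cH=\{F\cup T:\; F\in \cF_{n/k,d}^{(k)}\}.
\]
Since every edge of $F$ is transversal to the partition $V_1\cup\cdots\cup V_k$ while every edge of $T$ lies inside a single part, no pair of vertices is covered by two distinct edges of $F\cup T$, so each $H\in\cH$ is a partial Steiner $(n,k,2)$-system. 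Any independent set $I$ of $H$ also satisfies $|I\cap V_i|\le \alpha(T_i)$, giving the uniform bound $\alpha(H)\le \sum_{i=1}^k\alpha(T_i)=O(n^{(k-2)/(k-1)}(\log n)^{1/(k-1)})=:\alpha$.

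By Lemma~\ref{lem:sizeofFmd}, $\log|\cH|\ge \log|\cF_{n/k,d}^{(k)}|=\Omega(n^2\log n)$. On the other hand, if every $H\in\cH$ admitted a $t$-representation, Lemma~\ref{lem:representationnum} would give $|\cH|\le (2\binom{n}{\alpha})^t$; using $\log\binom{n}{\alpha}=O(\alpha\log n)$ and rearranging yields $t=\Omega(n^{k/(k-1)}/(\log n)^{1/(k-1)})$, which is the bound claimed in part~i).

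For part~ii) the construction above no longer produces \emph{full} Steiner systems. I would instead combine the existence of full Steiner $(n,k,2)$-systems for infinitely many $n$ (Glock--K\"uhn--Lo--Osthus; Keevash) with the enumeration estimate giving $2^{\Omega(n^2\log n)}$ such systems (via Linial--Luria or Keevash--Kwan-type counting), and apply Lemma~\ref{lem:representationnum} exactly as in part~i). The principal obstacle is to exhibit a subfamily of this size all of whose members have independence number of the optimal order $n^{(k-2)/(k-1)}(\log n)^{1/(k-1)}$; this should be achievable by a random-completion argument starting from the small-independence partial systems $T_m$ above, or by a second-moment analysis of a suitable $K_k^{(k)}$-packing process.
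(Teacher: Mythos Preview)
Your upper bound is exactly the paper's argument.

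For part~i) your lower-bound argument is correct and follows the same counting template as the paper: fix small-independence systems on the parts, vary a large family of transversal linear $k$-graphs, and compare $|\cH|$ against Lemma~\ref{lem:representationnum}. The ingredients differ, though. You use Lemma~\ref{lem:sizeofFmd} (linear $k$-partite $k$-graphs of bounded degree) together with an AKPSS/nibble partial system of small independence number; the paper instead uses Theorem~\ref{th:indsteiner} (a \emph{full} $(m,k,2)$-system with independence $O(m^{(k-2)/(k-1)}(\log m)^{1/(k-1)})$) on each part and, for the transversal family, Keevash's count of $K_k^{(2)}$-decompositions of $K_k^{(2)}(m)$ (Theorem~\ref{th:decomposition}). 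Your version has the advantage of reusing Lemma~\ref{lem:sizeofFmd} already available in the paper; the paper's version has the advantage described next.

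For part~ii) your proposal has a genuine gap: you correctly identify the obstacle (producing $2^{\Omega(n^2\log n)}$ \emph{full} systems all with independence number of order $n^{(k-2)/(k-1)}(\log n)^{1/(k-1)}$) but only gesture at a random-completion or second-moment fix. The paper sidesteps this entirely. Because the part systems $S_1,\dots,S_k$ are \emph{full} $(m,k,2)$-systems and each $F\in\cF$ is a $K_k^{(2)}$-decomposition of $K_k^{(2)}(m)$ --- i.e.\ every cross-pair $\{x_i,x_j\}$ with $x_i\in V_i$, $x_j\in V_j$ lies in exactly one edge of $F$ --- the union $H=F\cup\bigcup_{i}S_i$ is automatically a full $(n,k,2)$-system for \emph{every} $F\in\cF$. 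The independence number of each such $H$ is bounded by $k\,\alpha(S)$, which is fixed once $S$ is fixed. Thus a single construction gives the lower bound for $s^*(n,k,2)$ (and hence for $s(n,k,2)$ as well), with the size of $\cH$ coming from Theorem~\ref{th:decomposition}. The point you were missing is to replace the merely-linear family $\cF_{n/k,d}^{(k)}$ by the family of \emph{decompositions}, so that fullness is built in rather than something to be recovered afterwards.
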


Note that Theorem \ref{th:steiner}.i) is just a reformulation of Theorem \ref{th:linear}.
Before we give a proof of the theorem, we are going to introduce two useful results.

Let $K_k^{(\ell)}(m)$ be an $m$-blowup of the complete $\ell$-graph with $k$ vertices, i.e., $K_k^{(\ell)}(m)$ is the graph with vertex set $V=\bigcup_{i=1}^k V_i$, where $|V_i|=m$ for $1\leq i \leq k$, and is such that for every $1\leq i_1<\ldots<i_{\ell}\leq k$ and $x_{i_j}\in V_{i_j}$ the $\ell$-tuple $\{x_{i_1},\ldots,x_{i_\ell}\}$ is an edge. The $K_k^{(\ell)}$-decomposition of $K_k^{(\ell)}(m)$ is a system of pairwise edge disjoint copies of $K_k^{(\ell)}$ covering all edges of $K_k^{(\ell)}(m)$. Equivalently, one can view each such decomposition as a $k$-partite $k$-graph $F$ with vertex partition $V(F)=\bigcup_{i=1}^k V_i$, where $|V_i|=m$ for $1\leq i \leq k$, and such that for every $1\leq i_1<\ldots <i_\ell\leq k$ and vertices $x_{i_j}\in V_{i_j}$ there exists exactly one edge $f \in F$ with $\{x_{i_1},\ldots, x_{i_\ell}\}\subseteq f$.

In \cite{K2}, Keevash obtained strong bounds on the number of such decompositions.

\begin{theorem}[Theorem 2.8, \cite{K2}]\label{th:decomposition}
The number of distinct $K_k^{(\ell)}$-decompositions of $K_k^{(\ell)}(m)$ is given by 
\begin{align*}
\left(\left(e^{1-\binom{k}{\ell}}+o(1)\right)m^{k-\ell}\right)^{m^\ell}
\end{align*}
for sufficiently large $m$.
\end{theorem}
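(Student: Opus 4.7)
First I would set the combinatorial stage: a $K_k^{(\ell)}$-decomposition of $K_k^{(\ell)}(m)$ is equivalent to a $k$-partite $k$-uniform hypergraph $F$ with parts $V_1,\ldots,V_k$ of size $m$, where every transversal $\ell$-tuple (one vertex from each of some $\ell$ of the parts) is contained in exactly one edge of $F$. Double-counting the $\binom{k}{\ell}m^\ell$ transversal $\ell$-tuples against the $\binom{k}{\ell}$ transversal $\ell$-subsets of each edge yields $|F|=m^\ell$. So the problem reduces to counting families of $m^\ell$ transversal $k$-tuples that partition the transversal $\ell$-tuples perfectly; call this number $N(k,\ell,m)$. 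The naive bound $\binom{m^k}{m^\ell}$ recovers the $m^{(k-\ell)m^\ell}$ factor but misses the $e^{(1-\binom{k}{\ell})m^\ell}$ refinement.

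For the upper bound I would apply the entropy method. Let $F$ be a uniformly random decomposition, so $\log N(k,\ell,m)=H(F)$. Fix a linear order on the $m^\ell$ transversal $\ell$-tuples supported on the first $\ell$ parts, and let $e_i$ denote the unique edge of $F$ through the $i$-th such $\ell$-tuple. Then $H(F)=\sum_{i=1}^{m^\ell}H(e_i\mid e_1,\ldots,e_{i-1})$, reducing the problem to estimating each conditional entropy. The remaining $k-\ell$ coordinates of $e_i$ are a priori free to take $m$ values each, but must be consistent with the $\binom{k}{\ell}-1$ other families of transversal $\ell$-tuples that $e_i$ is required to cover exactly once. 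A Br\'egman-type bound, or equivalently Shearer's entropy inequality, applied to the bipartite incidence between transversal $\ell$-tuples and currently admissible transversal $k$-tuples, together with Stirling's formula, produces precisely the $e^{1-\binom{k}{\ell}}$ correction per edge. Averaging and summing across $i$ gives $H(F)\leq m^\ell\bigl((k-\ell)\log m+(1-\binom{k}{\ell})+o(1)\bigr)$, which is the desired upper estimate.

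For the lower bound I would run the R\"odl nibble iteratively and track the number of choices made at each stage, as in Keevash's existence argument for designs. At each nibble step one selects a pseudo-random $\varepsilon$-fraction of currently admissible transversal $k$-tuples to add to the partial decomposition; standard concentration guarantees that the auxiliary hypergraph of \emph{still admissible} extensions remains quasi-regular, with codegrees evolving predictably. Multiplying the numbers of independent choices across the $O(\varepsilon^{-1}\log m)$ stages recovers the upper bound up to the $o(1)$ in the base. The final few stages, where the nibble loses accuracy, are handled via Keevash's absorbing/completion machinery from \cite{K2}. The main obstacle is pinning down the precise constant $e^{1-\binom{k}{\ell}}$ in both directions simultaneously: the upper side demands a sharp Br\'egman-type inequality whose row-sums evolve as edges are placed, while the lower side demands the nibble be carried all the way to a full decomposition without multiplicative loss in the counting; reconciling these is the technical heart of Keevash's argument and is what forces the full design-existence apparatus rather than elementary semi-random reasoning.
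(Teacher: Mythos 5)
The paper does not prove this statement: it is quoted verbatim as Theorem~2.8 of Keevash's \emph{Counting designs} paper \cite{K2} and used as a black box in the proof of Theorem~\ref{th:steiner}. So there is no in-paper argument to compare against; what you have written is an outline of the external proof, and as an outline it correctly identifies both halves of Keevash's strategy (an entropy/permanent-type upper bound in the spirit of Linial--Luria, and a lower bound obtained by counting the choices made by a nibble completed with absorbers). Your preliminary bookkeeping is also right: each edge of a decomposition carries $\binom{k}{\ell}$ transversal $\ell$-subsets, so $|F|=m^\ell$, the chain rule over the $m^\ell$ transversal $\ell$-tuples on the first $\ell$ parts does enumerate all edges, and $H(F)\le m^\ell\bigl((k-\ell)\log m+1-\binom{k}{\ell}+o(1)\bigr)$ exponentiates to the claimed bound.

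That said, as a proof this has two genuine gaps, each of which is itself a hard theorem. First, the sentence asserting that ``a Br\'egman-type bound, or equivalently Shearer's inequality, \ldots produces precisely the $e^{1-\binom{k}{\ell}}$ correction per edge'' is the entire content of the upper bound and is not justified; moreover, with a \emph{fixed} linear order on the $\ell$-tuples, as you set it up, the conditional entropies need not individually exhibit that constant --- the Linial--Luria argument extracts $1-\binom{k}{\ell}$ only after averaging over a random revelation order, where the saving per edge appears as an integral of $\log$ of the fraction of its $\ell$-subsets not yet covered. Second, for the lower bound, ``multiplying the numbers of independent choices across the nibble stages'' only counts approximate decompositions; showing that the completion via absorbers does not collapse distinct partial decompositions (i.e.\ that the count survives without a multiplicative loss of $e^{\Omega(m^\ell)}$ in the base) is precisely the difficult part of \cite{K2}, and you defer it entirely to that reference. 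In short, your proposal is a correct roadmap of the known proof but not a proof: both quantitative cruxes are named rather than established.
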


We will also need the following result on existence of $(n,k,\ell)$-systems with bounded independence number.

\begin{theorem}[\cite{GPR}]\label{th:indsteiner}
For $k>\ell\geq 2$, there is a positive constant $c$ depending only on $k$ such that the following holds. If $m=q^2$ for $q$ sufficiently large prime power, then there exists a full Steiner $(m,k,2)$-system $S$ with $\alpha(S)\leq cm^{\frac{k-2}{k-1}}(\log m)^{\frac{1}{k-1}}$.
\end{theorem}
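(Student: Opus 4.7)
The plan is to produce $S$ by a randomised construction and control $\alpha(S)$ via a first-moment estimate followed by a union bound over all candidate independent sets.

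For the construction I would run the R\"odl nibble on pairs: starting from $G_0 = K_m$, at stage $i$ pick a random $k$-clique $B_i$ of the current graph $G_{i-1}$, add $B_i$ to $S$, and set $G_i = G_{i-1} \setminus \binom{B_i}{2}$. Standard nibble/differential-equation analysis yields a near-Steiner $(m,k,2)$-system $S_0$ covering all but $o(m^2)$ of the pairs. To obtain a full Steiner system I would complete $S_0$ by either applying Keevash's existence theorem \cite{K} to the residual pair graph or, using the assumption $m = q^2$, by an algebraic completion seeded by the affine plane $AG(2,q)$; in either case only $o(m^2)$ extra blocks are added, too few to spoil the independence-number bound aimed for below.

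To bound $\alpha(S)$, set $s = Cm^{(k-2)/(k-1)}(\log m)^{1/(k-1)}$ with $C = C(k)$ large. For a fixed $U \subset V$ with $|U|=s$, since $|S| = \Theta(m^2)$ and a random $k$-set lies inside $U$ with probability $\approx (s/m)^k$, the expected number of blocks of $S$ contained in $U$ is of order $s^k/m^{k-2}$. If one can establish the tail estimate
\begin{align*}
\Pr\bigl[U\text{ is independent in }S\bigr] \;\le\; \exp\!\bigl(-c\, s^k/m^{k-2}\bigr),
\end{align*}
then the choice of $s$ gives $c\,s^k/m^{k-2} = cC^{k-1} s\log(m/s)$, so a union bound over the $\binom{m}{s} \le (em/s)^s$ candidate subsets yields $\Pr[\alpha(S) \ge s] < 1$ for $C$ large enough, and some outcome is the desired Steiner system.

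The main obstacle is the tail bound itself, because the Steiner constraint globally correlates the blocks of $S$. To make the heuristic rigorous I would analyse the nibble stage by stage. Within a single stage, the new blocks lying inside $U$ behave, conditional on the history, as an approximately Poisson-distributed sample of cliques of $G_{i-1}[U]$, so a Janson-type inequality applied inside one stage gives the right exponential rate for that stage's contribution. Summing the log-probabilities over the $O(\log m)$ nibble rounds while handling the cross-stage dependencies via an Azuma-type martingale on a suitable filtration of revealed blocks should deliver the required tail. A delicate point is the final rounds, where $G_{i-1}[U]$ becomes too sparse for the Poisson heuristic: there one argues that the completion step consists of only $o(m^2)$ blocks and hence contributes negligibly to the probability that $U$ is independent compared to the bulk contribution from the nibble.
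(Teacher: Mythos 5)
First, note that the paper does not prove Theorem \ref{th:indsteiner} at all: it is imported verbatim from \cite{GPR} and used as a black box, so there is no in-paper argument to compare yours against. Judged on its own merits, your proposal gets the first-moment arithmetic right (the identity $s^k/m^{k-2}\approx C^{k-1}s\log(m/s)$ is correct, and since adding blocks can only decrease $\alpha$, it would indeed suffice to bound $\alpha$ of the partial system), but it has two genuine gaps. The first is the tail bound $\PP\left[U\text{ is independent}\right]\le\exp\left(-c\,s^k/m^{k-2}\right)$, which you state conditionally and never establish; this is the entire technical content of the theorem. The route you sketch will not deliver it: an Azuma-type martingale over the $\Theta(m^2)$ revealed blocks concentrates only within a window of order $\sqrt{m^2}$ times the Lipschitz constant, which is hopeless against the required failure probability $\exp(-\Omega(s\log m))$ needed to beat the union bound over $\binom{m}{s}$ sets, and Janson's inequality applies to independent indicator variables, not to the heavily conditioned measure produced by a nibble in which earlier blocks delete pairs available to later ones. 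Making the per-stage Poisson heuristic rigorous, uniformly over all candidate sets $U$, is precisely the hard part of \cite{GPR}, not a routine appendix to it.

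The second gap is the completion step. The leave of a nibble is a sparse graph on which you have no divisibility control (vertex degrees in the leave need not be divisible by $k-1$, nor the edge count by $\binom{k}{2}$), so it generally admits no $K_k$-decomposition on the same vertex set; Keevash's theorem \cite{K} requires a dense, quasirandom, divisibility-respecting host and does not apply to an arbitrary $o(m^2)$-edge residual graph. Embedding theorems that complete partial systems do so only after enlarging the vertex set, which changes $m$ and destroys any control of $\alpha$ on the added vertices. The hypothesis $m=q^2$ in the statement is itself a signal that the construction in \cite{GPR} is not ``nibble plus completion'' but is organised around algebraic ingredients (designs built over finite fields) combined with probabilistic selection; invoking $AG(2,q)$ as a ``seed'' without specifying how it interacts with the randomly chosen blocks does not close this gap.
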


\begin{proof}[Proof of Theorem \ref{th:steiner}]
To establish the upper bound we first note that any full or partial $(n,k,2)$-system $S$ is a $\Delta_1(S)$-balanced $k$-graph. Indeed, observe that for any $2\leq i \leq k-1$ we have $\Delta_i(S)\leq 1 < \Delta_1(S)^{\frac{k-i}{k-1}}$. Also note that for $x\in V(S)$,
\begin{align*}
    \deg(x)\leq \sum_{y\in [n]\setminus\{x\}}\deg(\{x,y\})\leq n-1,
\end{align*}
since $\deg(\{x,y\})\leq 1$ for all $x,y \in V(S)$. Hence $\Delta_1(S)\leq n$ and by the upper bound of Theorem \ref{th:balanced}, there exists positive constant $c_2$ such that
\begin{align*}
    \vartheta(S)\leq c_2\Delta_1(S)^{\frac{k}{k-1}}\log n\leq c_2 n^{\frac{k}{k-1}}\log n.
\end{align*}

Next we prove the lower bounds of $s(n,k,2)$ and $s^*(n,k,2)$. We may assume without changing the asymptotics that $m=n/k=q^2$ for $q$ sufficiently large prime. By Theorem \ref{th:indsteiner} there exists a full $(m,k,2)$-system $S$ with $\alpha(S)\leq cm^{\frac{k-2}{k-1}}(\log m)^{\frac{1}{k-1}}$. Consider $k$ vertex disjoint copies $S_1,\ldots, S_k$ of $S$ and let $V_i=V(S_i)$ for $1\leq i \leq k$. Let $\cF$ be the family of $k$-partite $k$-graphs $F$ with vertex set $V(F)=\bigcup_{i=1}^k V_i$ such that for every $1\leq i<j\leq k$ and $x_i\in V_i$, $x_j\in V_j$, there exists exactly one $f\in F$ such that $\{x_i, x_j\}\subseteq f$. As discussed previously, the family $\cF$ is in one-to-one correspondence with the $K_k^{(2)}$-decompositions of $K_k^{(2)}(m)$. Thus, by Theorem \ref{th:decomposition}, we have
\begin{align}\label{eq:cF}
    |\cF|=\left(\left(e^{1-\binom{k}{2}}+o(1)\right)m^{k-2}\right)^{m^2},
\end{align}
for sufficiently large $m$.

Let $\cH$ be the family of graphs defined by
\begin{align*}
    \cH=\{F\cup \bigcup_{i=1}^k S_i:\: F\in \cF\}.
\end{align*}
We observe that every $H\in \cH$ is a full $(n,k,2)$-system. Let $H=F\cup \bigcup_{i=1}^k S_i$ for some $F\in \cF$ and let $\{x,y\} \subseteq V(H)$. If $\{x,y\} \subseteq V_i$ for some $1\leq i \leq k$, then since $S_i$ is a full Steiner system, there exists exactly one edge $e\in S_i$ such that $\{x,y\}\subseteq e$. Also if $x\in V_i$ and $y\in V_j$ for $i\neq j$, then by the definition of $F$, there exists exactly one edge $f\in F$ such that $\{x,y\}\subseteq f$.

Set $\alpha=k\alpha(S)$ and let $t$ be the minimum integer such that every graph $H \in \cH$ admits a $t$-representation. By Lemma \ref{lem:representationnum} there are $\left(2\binom{n}{\alpha}\right)^t$ ways to $t$-represent graphs in $\cH$. Since every $t$-representation corresponds to a unique graph in $\cH$ and by our assumption that every graph $H\in \cH$ has a $t$-representation, we have by (\ref{eq:cF}) that
\begin{align*}
    2^{t\alpha \log n}\geq \left(2\binom{n}{\alpha}\right)^t\geq |\cH|=|\cF|=\left(\left(e^{1-\binom{k}{2}}+o(1)\right)m^{k-2}\right)^{m^2}\geq 2^{c'n^2\log n}
\end{align*}
Since $\alpha(S)\leq cm^{\frac{k-2}{k-1}}(\log m)^{\frac{1}{k-1}} \leq c''n^{\frac{k-2}{k-1}}(\log n)^{\frac{1}{k-1}}$, we obtain that
\begin{align*}
    t\geq c_1\frac{n^{\frac{k}{k-1}}}{(\log n)^{\frac{1}{k-1}}},
\end{align*}
for a positive constant $c_1$ depending on $k$. Therefore, there exists a full $(n,k,2)$-system $H \in \cH$
such that $\vartheta(H)\geq c_1\frac{n^{\frac{k}{k-1}}}{(\log n)^{\frac{1}{k-1}}}$. Since a full $(n,k,2)$-system is a partial $(n,k,2)$-system, we obtain that
\begin{align*}
    s(n,k,2)\geq s^*(n,k,2)\geq c_1\frac{n^{\frac{k}{k-1}}}{(\log n)^{\frac{1}{k-1}}}.
\end{align*}

\end{proof}

\section{Concluding Remarks}

\subsection{Representation of sparse hypergraphs of high uniformity}

In Sections \ref{sec:upper} and \ref{sec:lower} we determined upper and lower bounds for the family of $k$-graphs on $n$ vertices of bounded degree $d$. In particular, if we define
\begin{align*}
    f_k(d)=\lim_{n\rightarrow \infty}\frac{\vartheta(n,d,k)}{\log n},
\end{align*}
then we established almost sharp bounds for $k=2,3,4$, i.e., $\Omega\left(d^2/\log d\right)\leq f_2(d)\leq O(d^2)$, $\Omega\left(d^{3/2}/\log d\right) \leq f_3(d) \leq O(d^{3/2})$ and $\Omega\left(d^2/\log d\right)\leq f_4(d)\leq O(d^2)$.

Unfortunately, for $k\geq 5$, the bounds we have at the moment are worse. More precisely,
\begin{align*}
    \Omega\left(\frac{d^2}{\log d}\right)\leq f_k(d)\leq O\left(d^{k/2}\right)
\end{align*}
for $k$ even and 
\begin{align*}
    \Omega\left(\frac{d^{\frac{k}{k-1}}}{\log d}\right)\leq f_k(d)\leq O\left(d^{k/2}\right)
\end{align*}
for $k$ odd. It would be interesting to close the gap between the bounds for $f_k(d)$.

\subsection{Representation of $k$-partite $k$-graphs}

Let $\cK(n,d,k)$ be the family of $k$-partite $k$-graphs $G$ on $n$ vertices with bounded maximum degree $\Delta(G)\leq d$. As in the previous case, it would be interesting to find good bounds on $\vartheta(G)$ for $G\in \cK(n,d,k)$. Let 
\begin{align*}
    g_k(d)=\lim_{n\rightarrow \infty} \frac{\max\{\vartheta(G):\: G\in \cK(n,d,k)\}}{\log n}.
\end{align*}
A similar counting argument as the one used in Section \ref{sec:lower} yields that $g_k(d)=\Omega(d)$. For $k=2$, similar techniques as in Section \ref{sec:upper} give the upper bound which is linear in $d$, i.e., $g_2(d)=O(d)$. This leads us to believe that perhaps the same could be true for $k\geq 3$.

\begin{conjecture}
$g_k(d)=O(d)$ for $k\geq 3$.
\end{conjecture}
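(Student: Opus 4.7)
The plan is to extend the random-sampling-and-cleaning strategy of Propositions \ref{prop:upperbalanced} and \ref{prop:uppergeneral} to the $k$-partite setting, guided by the bipartite prototype for $k=2$. In the prototype, for $G$ bipartite with parts $A,B$ and $\Delta(G)\le d$, one sets $p_A=1/d$, $p_B=1$, takes $W_j=(W_j)_A\cup B$ with $(W_j)_A\subseteq A$ a $p_A$-random subset, and cleans by removing each $b\in B$ that has a neighbor in $(W_j)_A$. For a non-edge $\{a,b\}$ of $G$ one gets $\PP(\{a,b\}\subseteq I_j)\ge p_A(1-p_A)^d=\Omega(1/d)$, so that $t=O(d\log n)$ samples cover every non-edge with positive probability.

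For general $k$ I would first reduce to covering only the transversal non-edges: since every edge of $G$ is transversal to the partition $V_1,\ldots,V_k$, each union $V\setminus V_i$ of $k-1$ parts is an independent set of $G$. The $k$ sets $V\setminus V_1,\ldots,V\setminus V_k$ thus cover every non-transversal $k$-subset of $\ovl{G}$, so it suffices to cover the transversal non-edges (the elements of $V_1\times\cdots\times V_k$ not in $G$) by $O(d\log n)$ independent sets.

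Extending the bipartite prototype, one would choose probabilities $p_1,\ldots,p_k$ with $\prod_i p_i=\Theta(1/d)$, sample $W_j$ by including each $v\in V_i$ independently with probability $p_i$, and perform a one-sided cleaning: fix a distinguished part $V_k$ and delete $x_k\in V_k\cap W_j$ whenever some $G$-edge through $x_k$ has all its other $k-1$ vertices in $W_j$. The resulting $I_j$ is independent in $G$ by the same argument as in the bipartite case, and for a transversal non-edge $\ovl{e}=(x_1,\ldots,x_k)$ one has
\begin{align*}
\PP(\ovl{e}\subseteq I_j)\ge\Big(\prod_{i=1}^k p_i\Big)\cdot\PP\big(x_k\text{ survives}\,\big|\,x_1,\ldots,x_{k-1}\in W_j\big).
\end{align*}
It would then suffice to show the conditional survival probability is $\Omega(1)$, so that a union bound over the at most $n^k$ transversal non-edges closes the argument with $t=O(d\log n)$.

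The main obstacle is the cleaning step for $k\ge 3$. Conditional on $x_1,\ldots,x_{k-1}\in W_j$, the expected number of $G$-edges $e$ through $x_k$ with $e\setminus\{x_k\}\subseteq W_j$ splits according to $|e\cap\ovl{e}|$: generic edges meeting $\ovl{e}$ only at $x_k$ contribute at most $\Delta_1(G)\prod_{i<k}p_i=O(1)$, which is harmless, but edges sharing two or more vertices with $\ovl{e}$ contribute terms controlled by the higher codegrees $\Delta_r(G)$, which can be as large as $d$ with no nontrivial a priori bound. In Proposition \ref{prop:upperbalanced} these terms were bounded by the $d$-balanced hypothesis, and in Proposition \ref{prop:uppergeneral} by introducing an auxiliary $(k-1)$-graph $H$ of high-codegree tuples and using a two-pass cleaning. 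The heart of the conjecture would be to devise an analogous mechanism tailored to the $k$-partite structure so that the conditional survival probability remains $\Omega(1)$ while $\prod_i p_i=\Theta(1/d)$---something that none of the cleaning strategies in this paper currently achieves.
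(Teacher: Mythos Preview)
This statement is a \emph{conjecture} in the paper, not a theorem: the paper offers no proof, only the remark that the counting argument of Section~\ref{sec:lower} gives $g_k(d)=\Omega(d)$ and that the bipartite case $k=2$ can be handled by the methods of Section~\ref{sec:upper}. There is therefore no ``paper's own proof'' to compare against, and your write-up is not a proof either---as you yourself say in the final paragraph.

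That said, your outline is a sensible attack and your diagnosis of the obstruction is accurate. The reduction to transversal non-edges via the $k$ independent sets $V\setminus V_i$ is correct and costs only an additive $k$. The one-sided cleaning (delete only from $V_k$) does produce independent sets, and the term coming from edges meeting $\ovl{e}$ only at $x_k$ is indeed $O(1)$ when $\prod_{i<k}p_i=\Theta(1/d)$. The failure point is exactly where you locate it: for an edge $e\ni x_k$ with $|e\cap\ovl{e}|=r+1\ge 2$, the conditional probability that $e\setminus\{x_k\}\subseteq W_j$ is $\prod_{i\notin S,\,i<k}p_i$, and the number of such $e$ is governed by $\deg_G(S\cup\{x_k\})$, which in a general $k$-partite $k$-graph of maximum degree $d$ can be as large as $d$. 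With (say) $p_1=\cdots=p_{k-1}=d^{-1/(k-1)}$ this term is of order $d\cdot d^{-(k-1-r)/(k-1)}=d^{r/(k-1)}$, which blows up for any $r\ge 1$. Neither the balanced-codegree trick of Proposition~\ref{prop:upperbalanced} nor the auxiliary $(k-1)$-graph $H$ of Proposition~\ref{prop:uppergeneral} eliminates this, so the conjecture remains open; your proposal is a correct explanation of \emph{why}, not a resolution.
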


\bibliography{sources}

@article {L,
    AUTHOR = {Lehel, J.},
     TITLE = {Covers in hypergraphs},
   JOURNAL = {Combinatorica},
  FJOURNAL = {Combinatorica. An International Journal of the J\'{a}nos Bolyai
              Mathematical Society},
    VOLUME = {2},
      YEAR = {1982},
    NUMBER = {3},
     PAGES = {305--309},
      ISSN = {0209-9683},
   MRCLASS = {05C65 (05C70)},
  MRNUMBER = {698656},
MRREVIEWER = {F. Sterboul},
       DOI = {10.1007/BF02579237},
       URL = {https://doi.org/10.1007/BF02579237},
}

@article {RR,
    AUTHOR = {R\"{o}dl, V. and Ruci\'{n}ski, A.},
     TITLE = {Covering the edged of a random hypergraph by cliques},
     YEAR = {submitted}
}

@article {A,
    AUTHOR = {Alon, N.},
     TITLE = {Covering graphs by the minimum number of equivalence
              relations},
   JOURNAL = {Combinatorica},
  FJOURNAL = {Combinatorica. An International Journal of the J\'{a}nos Bolyai
              Mathematical Society},
    VOLUME = {6},
      YEAR = {1986},
    NUMBER = {3},
     PAGES = {201--206},
      ISSN = {0209-9683},
   MRCLASS = {05C70},
  MRNUMBER = {875288},
MRREVIEWER = {N. J. Pullman},
       DOI = {10.1007/BF02579381},
       URL = {https://doi.org/10.1007/BF02579381},
}

@incollection {F,
    AUTHOR = {Frankl, P.},
     TITLE = {Covering graphs by equivalence relations},
 BOOKTITLE = {Theory and practice of combinatorics},
    SERIES = {North-Holland Math. Stud.},
    VOLUME = {60},
     PAGES = {125--127},
 PUBLISHER = {North-Holland, Amsterdam},
      YEAR = {1982},
   MRCLASS = {05C70},
  MRNUMBER = {806976},
MRREVIEWER = {Arkadzi Charniak},
}

@article {ER,
    AUTHOR = {Eaton, N.and R\"{o}dl, V.},
     TITLE = {Graphs of small dimensions},
   JOURNAL = {Combinatorica},
  FJOURNAL = {Combinatorica. An International Journal on Combinatorics and
              the Theory of Computing},
    VOLUME = {16},
      YEAR = {1996},
    NUMBER = {1},
     PAGES = {59--85},
      ISSN = {0209-9683},
   MRCLASS = {05C35},
  MRNUMBER = {1394512},
MRREVIEWER = {A. Pultr},
       DOI = {10.1007/BF01300127},
       URL = {https://doi.org/10.1007/BF01300127},
}

@article {EGP,
    AUTHOR = {Erd\H{o}s, P. and Goodman, A. W. and P\'{o}sa, L.},
     TITLE = {The representation of a graph by set intersections},
   JOURNAL = {Canadian J. Math.},
  FJOURNAL = {Canadian Journal of Mathematics. Journal Canadien de
              Math\'{e}matiques},
    VOLUME = {18},
      YEAR = {1966},
     PAGES = {106--112},
      ISSN = {0008-414X},
   MRCLASS = {05.40},
  MRNUMBER = {186575},
MRREVIEWER = {F. Harary},
       DOI = {10.4153/CJM-1966-014-3},
       URL = {https://doi.org/10.4153/CJM-1966-014-3},
}

@misc{K,
  doi = {10.48550/ARXIV.1401.3665},
  
  url = {https://arxiv.org/abs/1401.3665},
  
  author = {Keevash, Peter},
  
  keywords = {Combinatorics (math.CO), FOS: Mathematics, FOS: Mathematics},
  
  title = {The existence of designs},
  
  publisher = {arXiv},
  
  year = {2014},
  
  copyright = {arXiv.org perpetual, non-exclusive license}
}

@misc{GKLO,
  doi = {10.48550/ARXIV.1611.06827},
  
  url = {https://arxiv.org/abs/1611.06827},
  
  author = {Glock, Stefan and Kühn, Daniela and Lo, Allan and Osthus, Deryk},
  
  keywords = {Combinatorics (math.CO), FOS: Mathematics, FOS: Mathematics},
  
  title = {The existence of designs via iterative absorption: hypergraph $F$-designs for arbitrary $F$},
  
  publisher = {arXiv},
  
  year = {2016},
  
  copyright = {arXiv.org perpetual, non-exclusive license}
}

@article {GPR,
    AUTHOR = {Grable, G. and Phelps, K. and R\"{o}dl, V.},
     TITLE = {The minimum independence number for designs},
   JOURNAL = {Combinatorica},
  FJOURNAL = {Combinatorica. An International Journal on Combinatorics and
              the Theory of Computing},
    VOLUME = {15},
      YEAR = {1995},
    NUMBER = {2},
     PAGES = {175--185},
      ISSN = {0209-9683},
   MRCLASS = {05B05},
  MRNUMBER = {1337352},
MRREVIEWER = {Ivan Landgev},
       DOI = {10.1007/BF01200754},
       URL = {https://doi.org/10.1007/BF01200754},
}

@incollection {K2,
    AUTHOR = {Keevash, Peter},
     TITLE = {Coloured and directed designs},
 BOOKTITLE = {Building {B}ridges. {II}},
    SERIES = {Bolyai Soc. Math. Stud.},
    VOLUME = {28},
     PAGES = {279--315},
 PUBLISHER = {Springer, Berlin},
      YEAR = {[2019] \copyright 2019},
   MRCLASS = {05C51 (05B05 05C65 05C70)},
  MRNUMBER = {4297785},
       DOI = {10.1007/978-3-662-59204-5\_9},
       URL = {https://doi.org/10.1007/978-3-662-59204-5_9},
}

\section{Appendix}\label{sec:appendix}

\begin{proposition}
Let $F$ be a $k$-graph for $k\geq 2$. Then $\Theta(F)=\cc(F)$.
\end{proposition}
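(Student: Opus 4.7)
The plan is to prove the equality by establishing both inequalities $\Theta(F)\leq \cc(F)$ and $\cc(F)\leq \Theta(F)$ via essentially dual constructions: vertex-to-clique incidence on one side, and set-to-membership on the other. The key point to check carefully is not only that edges of $F$ are correctly represented, but also that non-edges of $F$ are correctly \emph{not} represented, since the defining condition of $\cc$ forbids any clique from containing an edge outside $F$, and the defining condition of $\Theta$ is an ``if and only if''.

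For the direction $\Theta(F)\leq \cc(F)$, I would start from an optimal clique cover $C_1,\dots,C_m$ of $F$ with $m=\cc(F)$, take $T=[m]$, and define
\[
S_v = \{\,i\in [m]:v\in V(C_i)\,\}\subseteq T
\]
for each $v\in V(F)$. For a $k$-tuple $e=\{v_1,\dots,v_k\}$, the intersection $\bigcap_{j=1}^{k}S_{v_j}$ is precisely the set of indices $i$ such that $e\subseteq V(C_i)$, i.e., such that $e$ is an edge of $C_i$. If $e\in F$, then $e$ lies in some $C_i$ by the covering property, so the intersection is nonempty. Conversely, if the intersection is nonempty, some $C_i$ contains $e$ as an edge, and the assumption $F=\bigcup_{i}C_i$ forces $e\in F$; here it is crucial that the clique cover does not spill edges outside $F$.

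For the direction $\cc(F)\leq \Theta(F)$, I would start from an optimal set representation $\{S_v\subseteq T:v\in V(F)\}$ with $|T|=\Theta(F)$, and for each $i\in T$ define
\[
V_i = \{\,v\in V(F):i\in S_v\,\},\qquad C_i = V_i^{(k)},
\]
i.e., $C_i$ is the complete $k$-graph on $V_i$. For an edge $e=\{v_1,\dots,v_k\}$ of $F$, the set-representation property yields some $i\in\bigcap_{j}S_{v_j}$, which means $e\subseteq V_i$ and therefore $e\in C_i$. Conversely, if $e$ is an edge of some $C_i$, then $e\subseteq V_i$, so $i$ lies in each $S_{v_j}$, hence in their intersection, and the ``only if'' half of the set-representation property forces $e\in F$. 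This gives $F=\bigcup_{i\in T}C_i$ and hence $\cc(F)\leq |T|=\Theta(F)$.

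The two constructions are visibly inverse to each other, so there is no real obstacle: the argument is essentially bookkeeping. The one subtlety worth highlighting is the symmetric use of both directions of the defining biconditional in each construction, which is why the ``no extraneous edges'' clause in the definition of $\cc$ corresponds exactly to the ``only if'' in the definition of $\Theta$. Once both inequalities are established, we conclude $\Theta(F)=\cc(F)$.
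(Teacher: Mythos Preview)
Your proof is correct and follows essentially the same approach as the paper's: both directions use the vertex--clique incidence correspondence, defining $S_v=\{i:v\in C_i\}$ from a clique cover and $V_i=\{v:i\in S_v\}$ from a set representation, and both rely on the same biconditional check you highlight. The only cosmetic difference is that the paper writes the cliques as $F[C(s)]$ and verifies they are complete, whereas you declare $C_i=V_i^{(k)}$ and verify $C_i\subseteq F$; these are the same argument.
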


\begin{proof}
Let $\cC=\{C_1,\ldots,C_t\}$ be a system of cliques covering all edges of $F$. To each vertex $v \in V(F)$ assign the set $S_v$ of all cliques containing $v$. Since $\cC$ is a covering of the edges, if $\{v_1,\ldots,v_k\} \in F$, then there exists a clique $C_i$ containing $\{v_1,\ldots,v_k\}$. Thus $C_i \in \bigcap_{i=1}^k S_{v_i}$ and hence $\bigcap_{i=1}^k S_{v_i}\neq \emptyset$. On the other hand, by construction, $S_{v_j}=\{C_i\in\cC:\: v_j \in C_i\}$ and therefore the set $\bigcap_{i=1}^k S_{v_i}$ contains all cliques containing $\{v_1,\ldots, v_k\}$. Consequently, if $\bigcap\limits_{j=1}^{k}S_{v_j}=\emptyset$, then there is no clique containing $\{v_1,\ldots,v_k\}$, which implies that $\{v_1,\ldots,v_k\}\notin F$. Hence, we proved that $\cC$ is a set representation of $F$ and we obtain that \begin{align*}
\Theta(F) \leq \text{cc}(F).
\end{align*}

To prove the opposite inequality, let $V(F)=\{v_1,\ldots,v_n\}$ and let $t$ be the an integer such that $F$ admits a set representation $\{S_{v_i} \subset [t]:\: 1 \leq i \leq n\}$. For each $s\in [t]$, we define the set $C(s)=\{v_i:\: s\in S_{v_i}\}$. Since $s$ belongs to the intersection of any $k$ sets of the form $S_{v}$ for $v\in C(s)$, we obtain that $F[C(s)]$ is a clique. We claim that $\cF=\left\{F[C(1)],\ldots,F[C(s)]\right\}$ is a clique covering of $F$. Indeed, if $\{v_{i_1},\ldots,v_{i_k}\}\in F$ is an edge of $F$, then $\bigcap_{j=1}^k S_{v_{i_j}}\neq \emptyset$. Let $s\in\bigcap_{j=1}^k S_{v_{i_j}}$. Then clearly $\{v_{i_1},\ldots,v_{i_k}\}\in F[C(s)]$. Now if $\{v_{i_1},\ldots,v_{i_k}\}\notin F$, then $\bigcap_{j=1}^k S_{v_{i_j}}= \emptyset$ and consequently there is no $s$ such that $\{v_{i_1},\ldots,v_{i_k}\}\subseteq C(s)$. Hence, $\cF$ is a clique covering and $\cc(F)\leq \Theta (F)$ follows.
\end{proof}

\end{document}